\documentclass[12pt]{article}
\usepackage{amssymb, amsmath,amsfonts, latexsym, a4}
\usepackage[latin1]{inputenc}
\usepackage{graphicx}
\usepackage[all]{xy}

\textwidth 14.1 cm \textheight 21 cm

\usepackage[usenames]{xcolor}
\usepackage{ulem}

\newenvironment{proof}[1][Proof]{\noindent\textbf{#1.} }{\ \rule{0.5em}{0.5em}}

\newtheorem{definition}{Definition}[section]
\newtheorem{theorem}[definition]{Theorem}
\newtheorem{proposition}[definition]{Proposition}
\newtheorem{corollary}[definition]{Corollary}
\newtheorem{lemma}[definition]{Lemma}
\newtheorem{example}[definition]{Example}

\newcommand{\AWB}{\sf AWB}
\newcommand{\as}{\sf A}
\newcommand{\he}{\sf H}

\newcommand{\me}{\sf M}

\newcommand{\ene}{\sf N}
\newcommand{\be}{\sf B}

\newcommand{\ze}{\sf Z}
\newcommand{\re}{\sf R}
\newcommand{\fe}{\sf F}
\newcommand{\se}{\sf S}

\newcommand{\qu}{\sf Q}
\newcommand{\ga}{\sf G}

\newcommand{\ja}{\sf J}
\newcommand{\ia}{\sf I}

\newcommand{\K}{\mathbb{K}}
\newcommand{\Ker}{\sf Ker}
\newcommand{\im}{\sf Im}
\newcommand{\id}{\sf Id}
\begin{document}

\centerline{\bf Isoclinism  of Algebras With Bracket}

\bigskip
\centerline{J. M. Casas$^{(1)}$ and S. Hadi Jafari$^{(2)}$}
 \bigskip

\centerline{$^{(1)}$Departamento de Matemática Aplicada I \& CITMAga, Universidade de Vigo}
\centerline{E. E. Forestal, 36005 Pontevedra, Spain.}
\centerline{e-mail: jmcasas@uvigo.es\ (ORCID: 0000-0002-6556-6131)}

\centerline{$^{(2)}$ Department of Mathematics,  Mashhad Branch, Islamic Azad University}
\centerline{Mashhad, Iran}
\centerline{e-mail:  s.hadi\underline{ }jafari@yahoo.com\ (ORCID: 0000-0002-1166-0173)}

\bigskip \bigskip
\noindent

\bigskip \bigskip

\noindent{\bf Abstract:}

We introduce the notion of isoclinism between central extensions in the category of algebras with bracket.
We provide several equivalent conditions under which algebras with bracket are isoclinic.
We also study the connection between isoclinism and the Schur multiplier of algebras with bracket.
It is shown that for finite-dimensional central extensions of algebras with bracket with same dimension, the notion of isoclinism and isomorphism are equivalent. Furthermore, we indicate that all stem covers of an algebra with bracket are isoclinic. \\

\noindent Key Words: Algebra with bracket; Isoclinic extensions; Schur multiplier; Stem cover

\bigskip

\noindent AMS Subject Class. (2020): 16E40; 16E99; 16W99

\section{Introduction}

Isoclinism is an equivalence relation on the class of all groups, which is weaker than isomorphism. The study of the isoclinism of groups dates back to 1940 where P. Hall \cite{Ha} defined it with this idea in mind that all abelian groups fall into one equivalence class, namely they are equivalent to the trivial group. Two groups are said to be isoclinic if and only if there exists an isomorphism between their central quotients which induces an isomorphism between their commutator subgroups. In the case of finite groups, the smallest members of each equivalence class are called stem groups. The notion of isoclinism play an essential role in classification problems. Much is known about properties of groups being invariant under isoclinism. For instance, it is proved that, restricting ourselves to finite groups, many properties such as nilpotency, solvability, Schur covering group and commutative degree, are invariant under isoclinism.

In 1994, K. Moneyhun \cite{Mo} introduced the corresponding notion in the category of Lie algebras. {Obviously, he partitions all Lie algebras into disjoint equivalence classes, called isoclinism families. He also proved that each isoclinism family includes at least one stem Lie algebra of minimal dimension, which is characterized by the property that its center is contained in its derived subalgebra. The result concludes that the concepts of isoclinism and isomorphism between Lie algebras of the same finite dimension are equivalent.}

The concept of isoclinism on the central extensions of Lie algebras, which is a generalization of the above mentioned work of Moneyhun, is introduced in \cite{MSR}, in which it is shown that under some conditions, the concepts of isoclinism and isomorphism between the central extensions of finite-dimensional Lie algebras are identical. Later, in \cite{BC} the authors studied the central extensions relative to the Liezation functor, so called Lie-central extensions. In concrete, they deal with the notion of isoclinism on Lie-central extensions of Leibniz algebras.

The goal of the present article is the introduction of the notion of isoclinism on the central extensions of algebras with bracket structure, as a new sort of algebras, which was first introduced in \cite{CP}, with the novelty that it is an algebraic structure with two operations and, as far as we know, the study of the isoclinism relationship in algebraic structures with two operations has only been addressed in \cite{PU}. Roughly speaking, the algebras with bracket are associative (not necessarily commutative) algebras equipped with a bracket operation $[-,-]$ satisfying the relation
\[[a \cdot b,c]=[a,c]\cdot b+a\cdot[b,c]\]
with no further condition on $[-,-]$.
Conceptually, it is helpful to note that the algebras with bracket are a generalization of  Poisson algebras and studied as one of the lots of exotic algebraic structures have been developed by forgetting conditions on the classical structures.

The paper is structured as follows.
In the next section we fix our notations and establish some preliminary results.
In Section \ref{iso AWB}, we introduce the concept of isoclinism on the central extensions of algebras with bracket, and study the basic properties, while Section \ref{Schur} deals with the connection of isoclinism and Schur multiplier of algebras with bracket. Section \ref{Stem extensions} is devoted to  study stem extensions and stem covers of algebras with bracket, and showing that the concepts of isoclinism and isomorphism between  central extensions of finite-dimensional algebras with bracket are identical.

\section{ Algebras With Bracket} \label{AWB}
Throughout the paper we  fix a ground  field $\mathbb{K}$. All vector spaces are taken over $\mathbb{K}$. In what follows $\otimes$ means $\otimes_{\mathbb{K}}$.

 In this section we  summarize some basic notions about algebras with bracket,  the homology with trivial coefficients given in \cite{Ca, CP} and some results we will need later.

 \subsection{Basic definitions} \label{Basic}

 \begin{definition} \cite{CP} An  algebra with bracket (or \textup{AWB} for short) is  an associative $($not necessarily commutative$)$ algebra {\sf A}
equipped with a bilinear map $[-,-] : {\sf A} \times {\sf A} \to {\sf A}$, $(a, b)\mapsto [a,b]$ satisfying the following identity:
\begin{equation}\label{awbequa}
 [a  b,c] = [a,c]  b + a  [b,c]
\end{equation}
for all $a, b, c \in {\sf A}$.
\end{definition}

We denote by ${\sf AWB}$ the category whose objects are \textup{AWB}'s and whose morphisms are $\mathbb{K}$-linear maps
preserving the product and the bracket operation. It is a routine task to check that {\sf AWB} is a semi-abelian category \cite{BB, Mi}, so classical lemmas as Five Lemma holds for \textup{AWB}'s.

\begin{example}\label{ejemplos}\
\begin{enumerate}
\item[(a)] Poisson algebras, Noncommutative Leibniz-Poisson algebras \cite{CD} and Left-right noncommutative Poisson algebras \cite{CDL} are \textup{AWB}'s.

\item[(b)] Let {\sf A} be an associative $\K$-algebra equipped with a linear map $D: {\sf A} \to {\sf A}$. Then {\sf A} is an \textup{AWB} with
respect to the bracket $[a,b]:=aD(b) - D(b)a$. When $D = \id$, then {\sf A} is  an \textup{AWB} with respect to the usual bracket
for associative algebras, which is called the tautological \textup{AWB} associated to an associative algebra {\sf A}.

\item[(c)] If $({\sf A}, \prec, \succ)$ is a dendriform algebra (see \cite{Lo}), then $({\sf A},\star, [-,-])$ is an \textup{AWB}, where $a \star b = a \prec b + a \succ b$ and  $[a,b] = a \star b - b \star a$.

 \item[(d)] For examples coming from Physics we refer to \cite{Ka}.

 \item[(e)] Other examples, including 2-dimensional \textup{AWB}'s  can be found in \cite{Ca1}.

 \item[(f)] Let {\as} and {\be} be \textup{AWB}'s. Then $\as \times \be$ is an \textup{AWB} with respect to the componentwise operations.

\end{enumerate}
\end{example}

 The following notions for  an \textup{AWB} {\sf A} are given in \cite{Ca} and they agree with the corresponding  notions in semi-abelian categories. A {\it subalgebra} {\sf I} of {\sf A} is a $\mathbb{K}$-subspace which is closed under the product  and the bracket operation, that is, ${\sf I}~{\sf I} \subseteq {\sf I}$ and $[{\sf I},{\sf I}] \subseteq {\sf I}$.
{\sf I} is said to be a  {\it right (respectively, left) ideal} if  ${\sf A}~{\sf I} \subseteq {\sf I}$, $[{\sf A},{\sf I}] \subseteq {\sf I}$ (respectively, ${\sf I}~{\sf A} \subseteq {\sf I}$ , $[{\sf I},{\sf A}] \subseteq {\sf I}$).
If  ${\sf I}$ is both left and right ideal, then it is said to be  a {\it two-sided ideal}. In this case, the quotient
${\sf A}/{\sf I}$ is endowed with an \textup{AWB} structure  naturally induced from the operations on {\sf A}.

Let ${\sf I}, {\sf J}$ be two-sided ideals of {\sf A}. The {\it
commutator ideal} of {\sf I} and {\sf J} is the two-sided ideal  $[[{\sf I},{\sf J}]] = \langle \{ij, ji,[i,j],[j,i] \mid i \in {\sf I}, j \in {\sf J}
\} \rangle$ of {\sf I} and {\sf J}.
Obviously $[[{\sf I}, {\sf J}]] \subseteq {\sf I} \bigcap {\sf J}$. Observe that $[[{\sf I},{\sf J}]]$ is not a two-sided ideal of
{\sf A}, except when ${\sf I} = {\sf A}$ or ${\sf J} = {\sf A}$. In the particular case $ {\sf I} = {\sf J} = {\sf A}$, one obtains the definition of {\it derived algebra} of {\sf A}, i. e.,
$[[{\sf A},{\sf A}]]=\langle \{ab, [a,b] \mid a, b \in {\sf A} \} \rangle.$

The {\it center} of an \textup{AWB} {\sf A} is the two-sided ideal ${\ze}({\sf A}) =  \{a \in {\sf A} \mid ab
= 0 = ba,[a,b] = 0 = [b,a], {\rm for \ all}\ b \in {\sf A}\}.$
This object coincides with the categorical notion of annihilator in \cite{BoBu}, which is the centralizer object in the category, so it is usually also denoted by ${\sf Ann(A)}$.

An {\it abelian} algebra with bracket {\sf A} is an algebra with bracket with trivial product and bracket operation, i.e., ${\sf A} ~ {\sf
A} = 0 = [{\sf A},{\sf A}]$. Hence an algebra with bracket {\sf A} is abelian if and only if ${\sf A} = {\sf Z}({\sf A})$.


\subsection{Homology}

Let $V$ be a $\K$-vector space. Let be {${\cal R}^1(V)=V$ and ${\cal R}^n(V)=V^{\otimes n}\oplus V^{\otimes n},$ if $n\geq 2$. In order to distinguish elements from these tensor powers, we let $a_1\otimes \cdots \otimes  a_n$ be a typical element from the first component  of
${\cal R}^n(V)$, while $a_1\circ \cdots \circ a_n$ from the second component of ${\cal R}^n(V)$.

Let {\as} be an \textup{AWB} and consider  the complex $C_n^{AWB}({\as}):= R_{n+1}({\as}), n \geq 0$ with boundary maps given by
$$d_{n-1}(a_1\otimes \dots \otimes a_n) = \sum_{i=1}^{n-1} (-1)^{i+1}
a_1 \otimes \dots \otimes a_i  a_{i+1} \otimes \dots \otimes a_n$$
$$d_{n-1}(a_1\circ \cdots \circ a_n) = \sum_{i=1}^{n-1}
a_1 \otimes \dots \otimes  [a_i , a_n] \otimes  \dots \otimes
a_{n-1} + \sum_{i=1}^{n-2} (-1)^i a_1\circ \cdots \circ a_i a_{i+1} \circ \dots \circ a_n$$

The homology of the complex $(C_n^{AWB}({\as}),d_{n-1})$ is called the {\it homology with trivial
coefficients} of the \textup{AWB} {\as} and is denoted  by  ${\sf H_n^{AWB}}({\as})$ (see \cite{CP}).

Easy computations show that ${\sf H_0^{\AWB}}({\as}) \cong {\as}/[[{\as},{\as}]].$
On the other hand, for a free presentation $0 \to {\re} \to {\fe} \to {\as} \to 0$  of the
\textup{AWB} {\as}, by \cite[Corollary 2.14]{Ca},  the following  isomorphism holds:
\begin{equation} \label{Hopf}
{\sf H^{\AWB}_1}({\as}) \cong ({\re} \cap [[{\fe},{\fe}]])/[[{\re},{\fe}]].
\end{equation}

An extension of \textup{AWB}'s is a  short exact sequence of \textup{AWB}'s $(G) \colon 0 \to {\ene} \stackrel{\chi} \to {\ga} \overset{\pi} \to {\qu} \to 0$. It is said to be {\it central} if $[[{\ene}, {\ga}]] =0$ (equivalently, ${\ene} \subseteq {\ze}({\ga})$). A homomorphism between two extensions of \textup{AWB}'s  $(G_1)$ and  $(G_2)$ is a triple of \textup{AWB} homomorphisms $(\alpha, \beta, \gamma)$ that makes commutative the following diagram:
\[ \xymatrix{
(G_1) \colon  0 \ar[r] & {\ene}_1 \ar[r]^{\chi_1} \ar[d]^{\alpha} & {\ga}_1 \ar[r]^{\pi_1} \ar[d]^{\beta} &{\qu}_1 \ar[r] \ar[d]^{\gamma} & 0\\
(G_2) \colon 0 \ar[r] & {\ene}_2 \ar[r]^{\chi_2} & {\ga}_2 \ar[r]^{\pi_2} &{\qu}_2 \ar[r] & 0
} \]
When $\alpha, \beta, \gamma$ are bijective \textup{AWB} homomorphisms, $(\alpha, \beta, \gamma) \colon (G_1) \to (G_2)$ is called an isomorphism.
Due to \cite[Theorem 2.13]{Ca},  associated to an extension of \textup{AWB}'s $(G) : 0 \to \ene \stackrel{\chi} \to \ga \stackrel{\pi} \to \qu \to 0$ there exists the following exact and natural five-term exact sequence
\begin{equation}\label{five-term}
{\sf H_1^{\AWB}}({\ga}) \longrightarrow {\sf H_1^{\AWB}}({\qu}) \stackrel{\theta(G)}\longrightarrow \frac{\ene}{[[\ene, \ga]]} \longrightarrow {\sf H_0^{\AWB}}({\ga}) \longrightarrow {\sf H_0^{\AWB}}({\qu}) \longrightarrow 0.
 \end{equation}


\section{Isoclinic  \textup{AWB}'s} \label{iso AWB}

In this section we introduce the notion of isoclinism for \textup{AWB}'s and study its essential properties we will use in the following sections.

Consider the central extensions of \textup{AWB}'s $(G) : 0 \to \ene \stackrel{\chi} \to \ga \stackrel{\pi} \to \qu \to 0$ and $(G_i) : 0 \to {\ene}_i \stackrel{\chi_i}\to {\ga}_i \stackrel{\pi_i} \to {\qu}_i \to 0, i=1, 2,$

Let be $C : \qu \times \qu \to [[\ga, \ga]]$ given by $C(q_1,q_2)=[g_1,g_2]$ and $P : \qu \times \qu \to [[\ga, \ga]]$ given by $P(q_1,q_2)=g_1g_2$, where $\pi(g_j)=q_j, j= 1, 2$, the commutator maps associated to the extension $(G)$. In a similar way are defined the commutator maps $C_i, P_i$ corresponding to the extensions $(G_i), i = 1, 2$.

\begin{definition} \label{isoclinic}
The central extensions $(G_1)$ and $(G_2)$ are said to be isoclinic when there exist isomorphisms $\eta : \qu_1 \to \qu_2$ and $\xi : [[\ga_1, \ga_1]] \to [[\ga_2, \ga_2]]$ such that the following diagram is commutative:
\begin{equation}  \label{square isoclinic}
\xymatrix{
[[\ga_1, \ga_1]] \ar[d]^{\xi} & \qu_1 \times \qu_1 \ar[r]^{C_1} \ar[d]_{\eta \times \eta} \ar[l]_{P_1} & [[\ga_1, \ga_1]] \ar[d]^{\xi}\\
[[\ga_2, \ga_2]] & \qu_2 \times \qu_2 \ar[r]^{C_2} \ar[l]_{P_2} & [[\ga_2, \ga_2]]
}
\end{equation}

The pair $(\eta, \xi)$ is called an isoclinism from $(G_1)$ to $(G_2)$ and will be denoted by $(\eta, \xi) : (G_1) \to (G_2)$.
\end{definition}

Let $\qu$ be an \textup{AWB}, then we can construct the following central extension
\begin{equation} \label{Lie central extension}
(e_Q) : 0 \to \ze(\qu) \to \qu \stackrel{pr_{\qu}} \to \qu / \ze(\qu) \to 0.
\end{equation}

\begin{definition}
Two \textup{AWB}'s $\ga$ and $\qu$ are said to be isoclinic when $(e_G)$ and $(e_Q)$ are isoclinic central extensions.

An isoclinism $(\eta, \xi)$ from $(e_G)$ to $(e_Q)$ is also called an isoclinism  from $\ga$ to $\qu$, denoted by $(\eta, \xi) : \ga \sim \qu$.
\end{definition}

\begin{proposition} \label{isoclinism}
For an isoclinism $(\eta, \xi) : (G_1) \sim (G_2)$, the following statements hold:
\begin{enumerate}
\item[a)] $\eta$ induces an isomorphism $\eta' : \ga_1/\ze(\ga_1) \to \ga_2/\ze(\ga_2)$, and $(\eta', \xi)$ is an isoclinism from $\ga_1$ to $\ga_2$.
\item[b)] $\chi_1(\ene_1) = \ze(\ga_1)$ if and only if $\chi_2(\ene_2) = \ze(\ga_2)$.
\end{enumerate}
\end{proposition}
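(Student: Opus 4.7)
For $i = 1, 2$, I would first isolate the subset
\[
M_i := \{q \in \qu_i : C_i(q, q') = C_i(q', q) = P_i(q, q') = P_i(q', q) = 0,\ \forall q' \in \qu_i\}.
\]
Since $(G_i)$ is central, any two lifts of $q \in \qu_i$ to $\ga_i$ differ by an element of $\chi_i(\ene_i) \subseteq \ze(\ga_i)$, which annihilates every product and bracket, so $C_i$, $P_i$ (and hence $M_i$) are well defined. The key preliminary identity I would prove is $\pi_i^{-1}(M_i) = \ze(\ga_i)$: if $g \in \ze(\ga_i)$, then $g$ itself serves as a lift of $\pi_i(g)$ and yields zero in all four maps; conversely, if $\pi_i(g) \in M_i$, then choosing $g$ as a lift forces $gg' = g'g = [g,g'] = [g',g] = 0$ for every $g' \in \ga_i$, so $g \in \ze(\ga_i)$.

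For part (a), the commuting squares in diagram (\ref{square isoclinic}) immediately give $\eta(M_1) = M_2$: for $q \in M_1$ and any $q'' = \eta(q') \in \qu_2$, one has $C_2(\eta(q), q'') = \xi C_1(q, q') = 0$, and similarly for the three other vanishing conditions, with the converse handled by $\eta^{-1}$ and $\xi^{-1}$. Using the preimage identity on both sides, the composition $\ga_1 \xrightarrow{\pi_1} \qu_1 \xrightarrow{\eta} \qu_2 \twoheadrightarrow \qu_2/M_2 \cong \ga_2/\ze(\ga_2)$ is surjective with kernel exactly $\pi_1^{-1}(M_1) = \ze(\ga_1)$, and hence induces the desired isomorphism $\eta' : \ga_1/\ze(\ga_1) \to \ga_2/\ze(\ga_2)$. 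To verify that $(\eta', \xi)$ is an isoclinism $\ga_1 \sim \ga_2$, I would take $\bar g_1, \bar g_2 \in \ga_1/\ze(\ga_1)$, pick lifts $h_j \in \ga_2$ of $\eta(\pi_1(g_j))$ (so $\bar h_j = \eta'(\bar g_j)$), and read off from the original square that $\xi[g_1, g_2] = \xi C_1(\pi_1(g_1), \pi_1(g_2)) = C_2(\pi_2(h_1), \pi_2(h_2)) = [h_1, h_2]$, which is exactly the commutator map for $(e_{G_2})$ at $(\eta'(\bar g_1), \eta'(\bar g_2))$; the same chain with $P$ in place of $C$ handles the product map.

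Part (b) is then essentially a corollary: centrality gives $\chi_i(\ene_i) \subseteq \ze(\ga_i)$, so $\chi_i(\ene_i) = \ze(\ga_i)$ is equivalent to $\pi_i(\ze(\ga_i)) = 0$, which by the preimage identity amounts to $M_i = 0$; the bijection $\eta|_{M_1} : M_1 \to M_2$ from part (a) then makes one side vanish iff the other does. The main obstacle I anticipate is the preliminary identity $\pi_i^{-1}(M_i) = \ze(\ga_i)$ together with the well-definedness of $C_i, P_i$ modulo lifts: these are routine but do depend essentially on the centrality hypothesis and must be handled carefully, after which the remainder of the argument is a clean diagram chase.
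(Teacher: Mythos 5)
Your proof is correct, and at bottom it is the same argument as the paper's: your set $M_i$ is precisely $\pi_i(\ze(\ga_i)) = \ze(\ga_i)/\chi_i(\ene_i)$, and both proofs reduce part a) to showing that $\eta$ carries this subobject of $\qu_1$ onto the corresponding one in $\qu_2$. The packaging differs in a way worth noting, though. The paper argues on cosets: it computes $[g_2,y_2]+\ene_2$ in $\qu_2$ and justifies its vanishing by $\pi_2([g_2,y_2])=0$, which literally only gives $[g_2,y_2]\in\chi_2(\ene_2)$; to conclude that $g_2+\chi_2(\ene_2)$ lies in $\ze(\ga_2)/\chi_2(\ene_2)$ (rather than merely in $\ze(\qu_2)$, which can be larger) one really needs $[g_2,y_2]=0$ exactly in $\ga_2$. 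Your route through the commutator maps supplies exactly this: $C_2(\eta(q),q'')=\xi(C_1(q,q'))=0$ is an identity in $[[\ga_2,\ga_2]]$, independent of lifts by centrality, so the vanishing is on the nose. You also spell out two points the paper leaves implicit or outsources: the verification that $(\eta',\xi)$ satisfies the isoclinism squares for $(e_{G_1})$ and $(e_{G_2})$, and part b), which the paper delegates to the Leibniz-algebra reference but which in your formulation is the one-line statement $M_i=0$ transported by the bijection $\eta|_{M_1}:M_1\to M_2$. So your write-up is a slightly more careful, self-contained version of the intended proof rather than a genuinely new one.
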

\begin{proof}
{\it a)} Clearly $\eta$ induces an epimorphism $\eta' : {\ga}_1/\ze({\ga}_1) \to {\ga}_2/\ze({\ga}_2)$, given by $\eta'(g_1+{\ze}({\ga}_1))=g_2+{\ze}({\ga}_2),$ such that $\pi_2(g_2)=\eta(\pi_1(g_1)), g_i \in {\ga}_i, i = 1, 2$.  So it is sufficient to show that $\eta(\ze(\ga_1)/N_1)=\ze(\ga_2)/N_2$.

Indeed, let $g_1 + {\ene}_1 \in {\ze}({\ga}_1)/{\ene}_1$ and $y_2 + {\ene}_2 \in {\ga}_2/{\ene}_2$. Then
\[[\eta(g_1 + {\ene}_1 ), y_2 + {\ene}_2] = [g_2 + {\ene}_2, y_2 + {\ene}_2] = [g_2, y_2]+{\ene}_2 = 0,\]
\noindent since $\pi_2[g_2,y_2] = [\eta(\pi_1(g_1)), \eta(\pi_1(y_1))] = \eta(\pi_1([g_1, y_1])) = 0$, for some $y_1 \in \ga_1$. Also
\[\eta(g_1 + {\ene}_1 )(y_2 + {\ene}_2) = (g_2 + {\ene}_2)(y_2 + {\ene}_2) = g_2 y_2+{\ene}_2 =0,\]
\noindent since $\pi_2(g_2y_2) = \eta(\pi_1(g_1))\eta(\pi_1(y_1)) = \eta(\pi_1(g_1y_1)) = 0$, for some $y_1 \in \ga_1$.

Similarly with the operations in the right side. Therefore $\eta(\ze(\ga_1)/N_1)\subseteq\ze(\ga_2)/N_2$. A similar argument for $\eta^{-1}$ yields the reverse inclusion which gives the assertion.

To show {\it b)}  it is enough to follow the same arguments as those of the proof of Proposition 3.4 {\it b)} in \cite{BC}.
\end{proof}

\begin{proposition} \label{equivalence relation}
Isoclinism of algebras with bracket is an equivalence relation.
\end{proposition}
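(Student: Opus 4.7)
The plan is to verify the three axioms of an equivalence relation (reflexivity, symmetry, transitivity) directly from Definition~\ref{isoclinic}, by exhibiting, in each case, the appropriate pair of isomorphisms $(\eta,\xi)$ and checking that diagram~(\ref{square isoclinic}) commutes. Since the commutator maps $C_i$ and $P_i$ depend only on the data of the central extension $(G_i)$, all the required verifications reduce to manipulations of isomorphisms and commutative squares, so I expect the argument to be essentially formal.

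For reflexivity, given a central extension $(G) \colon 0 \to \ene \to \ga \to \qu \to 0$, I take $\eta = \id_{\qu}$ and $\xi = \id_{[[\ga,\ga]]}$. The commutativity of~(\ref{square isoclinic}) is immediate since $C \circ (\id \times \id) = C$ and $P \circ (\id \times \id) = P$. For symmetry, suppose $(\eta,\xi) \colon (G_1) \to (G_2)$ is an isoclinism. Because $\eta$ and $\xi$ are isomorphisms of \textup{AWB}'s, their inverses $\eta^{-1} \colon \qu_2 \to \qu_1$ and $\xi^{-1} \colon [[\ga_2,\ga_2]] \to [[\ga_1,\ga_1]]$ are also \textup{AWB} isomorphisms. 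Applying $\xi^{-1}$ on the left and $\eta^{-1} \times \eta^{-1}$ on the right of the identities $\xi \circ C_1 = C_2 \circ (\eta \times \eta)$ and $\xi \circ P_1 = P_2 \circ (\eta \times \eta)$ gives the analogous identities for $(G_2) \to (G_1)$, so $(\eta^{-1}, \xi^{-1}) \colon (G_2) \to (G_1)$ is an isoclinism.

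For transitivity, assume $(\eta_1,\xi_1) \colon (G_1) \to (G_2)$ and $(\eta_2,\xi_2) \colon (G_2) \to (G_3)$ are isoclinisms. I set $\eta := \eta_2 \circ \eta_1$ and $\xi := \xi_2 \circ \xi_1$; these are compositions of \textup{AWB} isomorphisms, hence isomorphisms. Pasting the two commutative diagrams of the form~(\ref{square isoclinic}) horizontally (the one for $(\eta_1,\xi_1)$ stacked on top of the one for $(\eta_2,\xi_2)$), one obtains $\xi \circ C_1 = C_3 \circ (\eta \times \eta)$ and $\xi \circ P_1 = P_3 \circ (\eta \times \eta)$, so $(\eta,\xi) \colon (G_1) \to (G_3)$ is an isoclinism.

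The argument is entirely formal, so there is no real obstacle; the only point requiring mild care is verifying that the composed (resp.\ inverse) maps preserve the \textup{AWB} structure, which is clear because $\eta_i$ and $\xi_i$ are \textup{AWB} homomorphisms and the class of \textup{AWB} isomorphisms is closed under composition and inversion.
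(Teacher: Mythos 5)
Your proposal is correct and is exactly the routine verification the paper has in mind: the paper states Proposition~\ref{equivalence relation} without proof, treating it as immediate from Definition~\ref{isoclinic}, and your choice of $(\id_{\qu},\id_{[[\ga,\ga]]})$ for reflexivity, $(\eta^{-1},\xi^{-1})$ for symmetry, and $(\eta_2\circ\eta_1,\xi_2\circ\xi_1)$ for transitivity, together with the pasting of the commutative squares for both $C$ and $P$, is the standard argument. The only wording slip is calling the pasting of the squares ``horizontal'' while describing it as stacking one diagram on top of the other; the mathematics is unaffected.
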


Evidently, isoclinism between  central extensions divides the class of all central extensions of \textup{AWB}'s into equivalence classes, called isoclinism families. From Proposition \ref{isoclinism}, the classes of isoclinic \textup{AWB}'s can be regarded as those isoclinism classes of central extensions with kernel equal to {\ze}({\ga}), that is, the central extensions $(G)$ and $(e_G)$ are isomorphic.

\begin{proposition} \label{backward}
Let $(\eta, \xi) : (G_1) \sim (G_2)$ be an isoclinism. Then the backward induced extension $(\eta^{\ast}(G_2)): 0 \to \ene_2 \to \ga_2^{\eta} \stackrel{\overline{\pi}_2}\to \qu_1 \to 0$, obtained by pulling back along $\eta$ (where ${\ga_2}^{\eta} = \{ (g,q) \in  {\ga}_2 \times  {\qu}_1 \mid \pi_2(g) = \eta(q) \}$) is a central extension isomorphic to $(G_2)$, and $(\id_{\qu_1}, \xi) : (G_1) \sim (\eta^{\ast}(G_2))$ is an isoclinism.
\end{proposition}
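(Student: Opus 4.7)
\noindent My plan is to build the pullback explicitly as the AWB $\ga_2^{\eta}=\{(g,q)\in\ga_2\times\qu_1\mid \pi_2(g)=\eta(q)\}$, equipped with the componentwise product and bracket inherited from $\ga_2\times\qu_1$. The embedding $\ene_2\hookrightarrow\ga_2^{\eta}$ is $n\mapsto(\chi_2(n),0)$ and $\overline{\pi}_2\colon\ga_2^{\eta}\to\qu_1$ is the second projection. Exactness of $0\to\ene_2\to\ga_2^{\eta}\stackrel{\overline{\pi}_2}\to\qu_1\to0$ is straightforward: injectivity comes from that of $\chi_2$, surjectivity of $\overline{\pi}_2$ from that of $\pi_2$ and $\eta$, and an element $(g,q)$ with $q=0$ satisfies $\pi_2(g)=0$, so $g=\chi_2(n)$ for a unique $n\in\ene_2$. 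Centrality is inherited from that of $(G_2)$: if $n\in\ene_2$ then $\chi_2(n)\in\ze(\ga_2)$, so every product and bracket of $(\chi_2(n),0)$ with an arbitrary $(g',q')\in\ga_2^{\eta}$ vanishes componentwise; hence $\chi_2(\ene_2)\times\{0\}\subseteq\ze(\ga_2^{\eta})$.

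\noindent Next I would construct an isomorphism of extensions $(\eta^{\ast}(G_2))\to(G_2)$ via the triple $(\id_{\ene_2},\mathrm{pr}_1,\eta)$, where $\mathrm{pr}_1\colon\ga_2^{\eta}\to\ga_2,\ (g,q)\mapsto g$. This map is an AWB-homomorphism by definition of the operations on the pullback, and it is bijective because $\eta$ is an isomorphism: the inverse sends $g\in\ga_2$ to $\big(g,\eta^{-1}(\pi_2(g))\big)$. Commutativity of the two squares $\pi_2\circ\mathrm{pr}_1=\eta\circ\overline{\pi}_2$ and $\mathrm{pr}_1\circ(\chi_2,0)=\chi_2$ is immediate, so the Five Lemma (valid in the semi-abelian category {\AWB}) gives an isomorphism of extensions.

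\noindent For the second assertion I would use that $\mathrm{pr}_1$ restricts to an isomorphism $\phi\colon[[\ga_2^{\eta},\ga_2^{\eta}]]\to[[\ga_2,\ga_2]]$, since it is an isomorphism of the ambient AWB's. Under the implicit identification by $\phi$, the map $\xi$ from the original isoclinism becomes a map $\xi\colon[[\ga_1,\ga_1]]\to[[\ga_2^{\eta},\ga_2^{\eta}]]$. The commutator maps $C_2^{\eta},P_2^{\eta}\colon\qu_1\times\qu_1\to[[\ga_2^{\eta},\ga_2^{\eta}]]$ of the pulled back extension satisfy
\[
\phi\circ C_2^{\eta}(q_1,q_2)=[g,g']=C_2(\eta(q_1),\eta(q_2)),\qquad \phi\circ P_2^{\eta}(q_1,q_2)=P_2(\eta(q_1),\eta(q_2)),
\]
for any lifts $(g,q_1),(g',q_2)\in\ga_2^{\eta}$, because $\pi_2(g)=\eta(q_1)$ and $\pi_2(g')=\eta(q_2)$. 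Combining with the hypothesis $\xi\circ C_1=C_2\circ(\eta\times\eta)$ and $\xi\circ P_1=P_2\circ(\eta\times\eta)$ yields $C_2^{\eta}=\phi^{-1}\xi\circ C_1$ and $P_2^{\eta}=\phi^{-1}\xi\circ P_1$, which is exactly the commutativity of the isoclinism square for $(\id_{\qu_1},\xi)\colon(G_1)\sim(\eta^{\ast}(G_2))$.

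\noindent The only delicate point is bookkeeping: one has to be careful that the same letter $\xi$ appears for two different-looking codomains, and that the identification is made through $\phi$, the first-projection restriction on derived subalgebras. Once this identification is made explicit, every verification is an immediate componentwise calculation in the pullback, and no further structure theory is needed.
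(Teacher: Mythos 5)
Your proposal is correct and follows essentially the same route as the paper: both verify the pullback is a central extension isomorphic to $(G_2)$ via the first projection, and both transport $\xi$ to an isomorphism $[[\ga_1,\ga_1]]\to[[\ga_2^{\eta},\ga_2^{\eta}]]$ (your $\phi^{-1}\circ\xi$ is exactly the paper's induced map, defined there directly on generators) before checking the isoclinism square componentwise. Your explicit use of $\phi=\mathrm{pr}_1\mid_{[[\ga_2^{\eta},\ga_2^{\eta}]]}$ even handles the well-definedness of the induced map more cleanly than the paper's generator-by-generator definition.
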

\begin{proof}
The first statement only requires a routine checking.

To prove the next assertion, observe that $\xi$ induces an isomorphism (call it also $\xi$) $\xi : [[\ga_1, \ga_1]] \to [[\ga_2^{\eta}, \ga_2^{\eta}]]$ defined by $\xi([g_1,g_1'])=[(g_2,q_1),(g_2',q_1')]$ where $\pi_2(g_2) = \eta(q_1)=\eta (\pi_1(g_1))$ and $\pi_2(g_2') = \eta(q_1')=\eta (\pi_1(g_1'))$. Similarly we define $\xi(g_1g_1')=(g_2,q_1)(g_2',q_1')$. The commutativity of the diagram
\begin{equation}
\xymatrix{
[[\ga_1, \ga_1]] \ar[d]^{\xi} & \qu_1 \times \qu_1 \ar[r]^{C_1} \ar[d]_{\id_{\qu_1} \times \id_{\qu_1}} \ar[l]_{P_1} & [[\ga_1, \ga_1]] \ar[d]^{\xi}\\
[[\ga_2^{\eta}, \ga_2^{\eta}]] & \qu_1 \times \qu_1 \ar[r]^{C_2^{\eta}} \ar[l]_{P_2^{\eta}} & [[\ga_2^{\eta}, \ga_2^{\eta}]]
}
\end{equation}
where $C_2^{\eta}(q_1,q_1')=[(g_2,q_1),(g_2',q_1')]$ and $P_2^{\eta}(q_1q_1')=(g_2,q_1)(g_2',q_1')$, follows directly.
\end{proof}

\begin{proposition} \label{3.6}
Let $(\eta, \xi) : (G_1)  \sim (G_2)$ be an isoclinism. Then the following statements hold:
\begin{enumerate}
\item[a)] $\pi_2 (  \xi(g)) = \eta ( \pi_1(g))$, for all $g \in [[\ga_1, \ga_1]]$.
\item[b)] $\xi \left( \chi_1(\ene_1) \cap [[\ga_1, \ga_1]] \right) = \chi_2(\ene_2) \cap [[\ga_2, \ga_2]]$.
\item[c)] $\xi(g x) = h \xi(x); \xi(x g) = \xi(x) h; \xi([g,x]) = [h,\xi(x)]; \xi([x,g]) = [\xi(x), h]$, for all $x \in  [[{\ga}_1, {\ga}_1]], g \in {\ga}_1, h \in {\ga}_2$ such that $\eta (\pi_1(g)) = \pi_2(h)$.
\end{enumerate}
\end{proposition}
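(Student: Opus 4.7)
The plan is to establish the three items in order, using part (a) as the backbone for (b) and (c). The only ingredient needed beyond the commutativity of the isoclinism square in Definition \ref{isoclinic} is the centrality of the extensions, which guarantees that the commutator maps $C_i, P_i$ are well-defined on $\qu_i \times \qu_i$: two preimages of the same element of $\qu_i$ differ by a central element and therefore produce the same product or bracket in $[[\ga_i, \ga_i]]$.

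For part (a), I would exploit that $[[\ga_1, \ga_1]]$ is, as a $\K$-vector space, spanned by the elements $ab$ and $[a,b]$ with $a, b \in \ga_1$. Since $\xi$ is $\K$-linear, it suffices to verify the identity on these two types of generator. For $g = [a, b] = C_1(\pi_1(a), \pi_1(b))$, the commuting square of Definition \ref{isoclinic} yields $\xi(g) = C_2(\eta(\pi_1(a)), \eta(\pi_1(b))) = [h_a, h_b]$ for any $h_a, h_b \in \ga_2$ with $\pi_2(h_a) = \eta(\pi_1(a))$ and $\pi_2(h_b) = \eta(\pi_1(b))$; applying $\pi_2$ and using that $\eta$ and $\pi_1$ preserve the bracket gives $\pi_2(\xi(g)) = [\eta(\pi_1(a)), \eta(\pi_1(b))] = \eta(\pi_1(g))$. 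The product generator $g = ab$ is treated identically with $P_1, P_2$ in place of $C_1, C_2$.

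Part (b) is then immediate. If $x \in \chi_1(\ene_1) \cap [[\ga_1, \ga_1]]$ then $\pi_1(x) = 0$, so part (a) gives $\pi_2(\xi(x)) = 0$, placing $\xi(x)$ in $\ker \pi_2 \cap [[\ga_2, \ga_2]] = \chi_2(\ene_2) \cap [[\ga_2, \ga_2]]$; the reverse inclusion follows by applying the same argument to the isoclinism $(\eta^{-1}, \xi^{-1}) \colon (G_2) \sim (G_1)$, which exists by Proposition \ref{equivalence relation}. For part (c) I would handle the four identities in parallel. Concerning $\xi(gx) = h \xi(x)$, observe that $gx \in [[\ga_1, \ga_1]]$ is exactly $P_1$ evaluated on the preimages $g, x$ of $\pi_1(g), \pi_1(x)$, so the commuting square gives $\xi(gx) = P_2(\eta(\pi_1(g)), \eta(\pi_1(x))) = P_2(\pi_2(h), \pi_2(\xi(x))) = h \xi(x)$, using the hypothesis $\pi_2(h) = \eta(\pi_1(g))$ and part (a) for the middle equality. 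The three remaining identities are obtained by swapping the order of the arguments or by replacing $P$ with $C$.

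I do not foresee a genuine obstacle here. The subtlest point is checking that, in expressions such as $P_i(\pi_i(g), \pi_i(x))$, the value really depends only on $\pi_i(g)$ and $\pi_i(x)$ and not on the specific preimages chosen; this is exactly what centrality of the extensions provides, and once that bookkeeping is in place each of (a), (b) and (c) is a direct reading of the commuting square that defines isoclinism.
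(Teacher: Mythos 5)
Your proof is correct and takes essentially the same route as the paper: part \emph{c)} is the paper's own computation with $P_1,P_2,C_1,C_2$ combined with part \emph{a)}, and for parts \emph{a)} and \emph{b)} the paper merely defers to the analogous Leibniz-algebra result in \cite{BC}, whose argument is exactly the generator-plus-linearity and $\ker\pi_2$ reasoning you supply (including the correct observation that centrality makes $C_i,P_i$ well defined).
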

\begin{proof} The proofs of {\it a)}  and  {\it b)} follow the same arguments as the proof of Proposition 3.8 {\it a)}  and {\it b)} in \cite{BC}.

{\it c)} Let $x \in  [[{\ga}_1, {\ga}_1]], g \in {\ga}_1, h \in {\ga}_2$ such that $\eta(\pi_1(g)) = \pi_2(h)$. Then
\begin{align*}
\xi(g x)&=\xi \left(P_1 \left( \pi_1(g),\pi_1(x) \right) \right)\\
&=P_2(\eta ( \pi_1(g)),\eta ( \pi_1(x)))\\
&=P_2(\pi_2(h),\pi_2(\xi(x)))\\
&=h \xi(x),
\end{align*}
and
\begin{align*}
\xi([g,x])&=\xi(C_1(\pi_1(g),\pi_1(x)))\\
&=C_2(\eta ( \pi_1(g)),\eta ( \pi_1(x)))\\
&=C_2(\pi_2(h),\pi_2(\xi(x)))\\
&=[h,\xi(x)].
\end{align*}
The other equalities follow analogously.
\end{proof}

\begin{definition} \label{isoclinic1}
A homomorphism of  central extensions $(\alpha, \beta, \gamma) : (G_1) \to (G_2)$ is said to be isoclinic, if there exists an isomorphism $\beta' : [[\ga_1,\ga_1]] \to [[\ga_2,\ga_2]]$ with $(\gamma, \beta') : (G_1) \sim (G_2)$.

If $\beta$ is in addition an epimorphism (resp., monomorphism), then  $(\alpha, \beta, \gamma)$ is called an isoclinic epimorphism (resp., monomorphism).
\end{definition}

\begin{proposition} \label{equivalence}
For a homomorphism of central extensions $(\alpha, \beta, \gamma) : (G_1) \to (G_2)$, the following statements hold:
\begin{enumerate}
\item[a)]  $(\alpha, \beta, \gamma)$ is  isoclinic if and only if $\gamma$ is an isomorphism and $\Ker(\beta) \cap[[\ga_1,\ga_1]]  =0$.
    \item[b)] If  $(\alpha, \beta, \gamma)$ is isoclinic and $\beta'$ as in Definition \ref{isoclinic1}, then $\beta' = \beta_{\mid  [[\ga_1,\ga_1]] }$.
\end{enumerate}
\end{proposition}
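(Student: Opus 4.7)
My plan is to prove \textit{b)} first and then deduce \textit{a)} from it.

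For \textit{b)}, since $[[\ga_1, \ga_1]]$ is generated as a $\K$-subspace by the elements $gg'$ and $[g,g']$ with $g, g' \in \ga_1$, by linearity it suffices to verify $\beta(x) = \beta'(x)$ on such generators. Using that $\beta$ is an \textup{AWB}-homomorphism and that $(\alpha,\beta,\gamma)$ is a morphism of extensions (so $\pi_2 \circ \beta = \gamma \circ \pi_1$), I would chase
$$\beta([g,g']) = [\beta(g), \beta(g')] = C_2\bigl(\pi_2\beta(g), \pi_2\beta(g')\bigr) = C_2\bigl(\gamma\pi_1(g), \gamma\pi_1(g')\bigr),$$
and by the commutativity of the isoclinism square \eqref{square isoclinic} the last expression equals $\beta'(C_1(\pi_1(g), \pi_1(g'))) = \beta'([g,g'])$. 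The product case is handled identically, with $P_1, P_2$ replacing $C_1, C_2$.

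With {\it b)} in hand, the forward direction of {\it a)} is immediate: the isoclinism hypothesis already forces $\gamma$ to be an isomorphism, and for any $x \in \Ker(\beta) \cap [[\ga_1, \ga_1]]$ we have $0 = \beta(x) = \beta'(x)$ by {\it b)}, so $x = 0$ since $\beta'$ is an isomorphism. For the converse I would set $\beta' := \beta|_{[[\ga_1, \ga_1]]}$. The image lies in $[[\ga_2, \ga_2]]$ because $\beta$ preserves products and brackets, and $\beta'$ is injective by the kernel assumption. What remains is the surjectivity of $\beta'$ onto $[[\ga_2, \ga_2]]$ and the commutativity of the isoclinism square for $(\gamma, \beta')$.

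For surjectivity, given a generator $[g_2, g_2']$ or $g_2 g_2'$ of $[[\ga_2, \ga_2]]$, I would use that $\gamma$ is bijective and $\pi_1$ is surjective to pick lifts $g_1, g_1' \in \ga_1$ with $\pi_2(g_2) = \gamma(\pi_1(g_1))$ and $\pi_2(g_2') = \gamma(\pi_1(g_1'))$. Then $g_2 - \beta(g_1)$ and $g_2' - \beta(g_1')$ both lie in $\Ker(\pi_2) = \chi_2(\ene_2) \subseteq \ze(\ga_2)$ by centrality of $(G_2)$, so these error terms are killed inside brackets and products, yielding $[g_2, g_2'] = [\beta(g_1), \beta(g_1')] = \beta'([g_1, g_1'])$, and analogously for $g_2 g_2'$. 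The commutativity of the square is then read off the very computation done in {\it b)}, applied to these same lifts. The main obstacle is precisely this surjectivity step, where the bijectivity of $\gamma$, the surjectivity of $\pi_1$, and the centrality $\ene_2 \subseteq \ze(\ga_2)$ must all be used simultaneously; the rest is routine diagram chasing.
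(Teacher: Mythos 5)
Your proposal is correct and follows essentially the same route as the paper: the identity $\beta'=\beta$ on generators of $[[\ga_1,\ga_1]]$ via the square \eqref{square isoclinic} and $\pi_2\circ\beta=\gamma\circ\pi_1$, then injectivity from the kernel hypothesis and surjectivity by lifting generators of $[[\ga_2,\ga_2]]$ and absorbing the error terms into $\chi_2(\ene_2)\subseteq\ze(\ga_2)$. The only difference is organizational (you prove \emph{b)} first, whereas the paper embeds that computation in the forward direction of \emph{a)} and then reads \emph{b)} off it), and you are slightly more explicit than the paper in noting that the commutativity of the isoclinism square for $(\gamma,\beta')$ must still be checked in the converse direction.
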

\begin{proof}
Throughout this proof  and to simplify the calculations, we only consider elements of $[[{\ga}, {\ga}]]$ of the form $m=[g_1,h_1]+a_1b_1, g_1, h_1, a_1, b_1 \in {\ga}$, since a generic element of the commutator is of the form  $m=\displaystyle \sum_{i=1}^n \left( \lambda_i [g_i,h_i]+ \mu_ia_i b_i \right),$ $ g_{i}, h_{i}, a_i, b_i \in {\ga}, \lambda_i, \mu_i \in \mathbb{K}$, and the properties to prove  obviously extend by linearity.

{\it a)} Assume that $(\alpha, \beta, \gamma) : (G_1) \to (G_2)$ is isoclinic, then $(\gamma, \beta') : (G_1) \sim (G_2)$ is an isoclinism for some isomorphism $\beta' : [[\ga_1,\ga_1]] \to [[\ga_2,\ga_2]]$. This implies by definition that $\gamma$ is an isomorphism. Now let $m\in \Ker(\beta) \cap[[\ga_1,\ga_1]]$. Then $\beta(m)=0$ and $m=[g_1,h_1]+a_1b_1$ for some $g_1, h_1, a_1, b_1 \in {\ga}_1$. Also let $q_1=\pi_1(g_1)$, $q_2=\pi_1(h_1)$, $r_1=\pi_1(a_1)$, and $r_2=\pi_1(b_1)$. Since $(\gamma, \beta')$ is an isoclinism, we have
\begin{align*}
\beta'(m)&=\beta'([g_1,h_1]+a_1b_1)\\
&=\beta'(C_1(\pi_1(g_1),\pi_1(h_1))+{P_1(\pi_1(a_1),\pi_1(b_1))})\\
&=(\beta'\circ C_1)(\pi_1(g_1),\pi_1(h_1))+{(\beta'\circ P_1)(\pi_1(a_1),\pi_1(b_1))}\\
&=(C_2\circ (\gamma\times\gamma))(\pi_1(g_1),\pi_1(h_1))+{(P_2\circ (\gamma\times\gamma))(\pi_1(a_1),\pi_1(b_1))}\\
&=C_2(\gamma(\pi_1(g_1)),\gamma(\pi_1(h_1)))+{P_2(\gamma(\pi_1(a_1)),\gamma(\pi_1(b_1)))}\\
&=C_2(\pi_2(\beta(g_1)),\pi_2(\beta(h_1)))+{P_2(\pi_2(\beta(a_1)),\pi_2(\beta(b_1)))}\\
&=[\beta(g_1),\beta(h_1)]+{\beta(a_1)\beta(b_1)}\\
&=\beta([g_1,h_1]+{a_1b_1})\\
&=\beta(m)=0
\end{align*}
Since $\beta'$ is one-to-one, it follows that $m = 0$.

Conversely, assume that $\Ker(\beta) \cap[[\ga_1,\ga_1]]  =0$. Define $\beta' : [[\ga_1,\ga_1]] \to [[\ga_2,\ga_2]]$ by $\beta'(g)=\beta(g)$, which is one-to-one. It remains to show that $\beta'$ is onto. Let $y\in [[\ga_2,\ga_2]]$. Then $y=[g_2,h_2]+{a_2b_2}$ for some $g_2,h_2,a_2,b_2\in {\ga}_2$. Since $\pi_1$ and $\gamma$ are onto, it follows that $\pi_2(g_2)=(\gamma\circ \pi_1)(g_1)$, $\pi_2(h_2)=(\gamma\circ \pi_1)(h_1)$, $\pi_2(a_2)=(\gamma\circ \pi_1)(a_1)$, and $\pi_2(b_2)=(\gamma\circ \pi_1)(b_1)$ for some $g_1,h_1,a_1,b_1\in {\ga}_1$. By the homomorphism $(\alpha, \beta, \gamma)$, we have $(\gamma\circ \pi_1)(g_1)=(\pi_2\circ \beta)(g_1)$, $(\gamma\circ \pi_1)(h_1)=(\pi_2\circ \beta)(h_1)$, $(\gamma\circ \pi_1)(a_1)=(\pi_2\circ \beta)(a_1)$, and $(\gamma\circ \pi_1)(b_1)=(\pi_2\circ \beta)(b_1)$, which implies that $g_2-\beta(g_1)=\chi_2(n_2)$, $h_2-\beta(h_1)=\chi_2(m_2)$, $a_2-\beta(a_1)=\chi_2(p_2)$, and $b_2-\beta(b_1)=\chi_2(q_2)$ for some $n_2,m_2,p_2,q_2\in \ene_2$. We now have
\begin{align*}
\beta'([g_1,h_1]+a_1b_1)&=\beta([g_1,h_1]+a_1b_1)\\
&=[\beta(g_1),\beta(h_1)]+\beta(a_1)\beta(b_1)\\
&=[g_2-\chi_2(n_2),h_2-\chi_2(m_2)]+(a_2-\chi_2(p_2))(b_2-\chi_2(q_2))\\
&=y-([\chi_2(n_2),h_2-\chi_2(m_2)] + {\chi_2(p_2)(b_2-\chi_2(q_2))})\\
& -([g_2,\chi_2(m_2)]+ {a_2\chi_2(q_2)})\\
&=y.
\end{align*}
Statement {\it b)} follows directly from the proof of {\it a)}.
\end{proof}

The proofs of the following three results are similar to the corresponding facts for Leibniz algebras relative to the Liezation functor in \cite{BC} and are omitted.

\begin{proposition} \label{character}
Let $\beta : \ga \to \he$ be a homomorphism of \textup{AWB}'s. Then $\beta$ induces an isoclinic homomorphism from $(e_{G})$ to $(e_{H})$ if and only if $\Ker(\beta) \cap [[\ga,\ga]]=0$ and $\im(\beta) + \ze(\he) = \he$.

In this case we call $\beta$ an isoclinic homomorphism.
\end{proposition}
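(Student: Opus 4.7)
The strategy is to reduce the claim to Proposition~\ref{equivalence}(a) applied to the canonical central extensions $(e_G)$ and $(e_H)$. For this I first need a well-defined morphism of extensions, which requires $\beta(\ze(\ga)) \subseteq \ze(\he)$; once this holds, the triple $(\alpha,\beta,\gamma)$ with $\alpha = \beta_{\mid \ze(\ga)}$ and $\gamma$ the map induced by $\beta$ on the quotients makes sense, and Proposition~\ref{equivalence}(a) translates isoclinicity into the conjunction ``$\gamma$ is an isomorphism'' and ``$\Ker(\beta) \cap [[\ga,\ga]] = 0$''.

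For the ``only if'' direction, Proposition~\ref{equivalence}(a) directly yields $\Ker(\beta) \cap [[\ga,\ga]] = 0$. Since $\gamma : \ga/\ze(\ga) \to \he/\ze(\he)$ is an isomorphism, in particular surjective, every $h \in \he$ may be written as $\beta(g) + z$ for some $g \in \ga$ and $z \in \ze(\he)$, i.e.\ $\im(\beta) + \ze(\he) = \he$.

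For the converse, the first step I would carry out is $\beta(\ze(\ga)) \subseteq \ze(\he)$. Given $z \in \ze(\ga)$ and an arbitrary $h \in \he$, I decompose $h = \beta(g) + z'$ with $z' \in \ze(\he)$ using the image hypothesis. Each of the four defining identities of $\ze(\he)$ evaluated on the pair $(\beta(z),h)$ splits into a summand involving $z'$, which vanishes by centrality of $z'$ in $\he$, and a summand of the form $\beta(zg)$, $\beta(gz)$, $\beta([z,g])$ or $\beta([g,z])$, which vanishes by centrality of $z$ in $\ga$. With this in place, $\gamma$ is well-defined, and is surjective directly from the image hypothesis. For injectivity, suppose $\beta(g) \in \ze(\he)$; then for every $g' \in \ga$ the four elements $gg'$, $g'g$, $[g,g']$, $[g',g]$ lie in $[[\ga,\ga]]$ and are annihilated by $\beta$ thanks to centrality of $\beta(g)$, so $\Ker(\beta) \cap [[\ga,\ga]] = 0$ forces them to vanish, whence $g \in \ze(\ga)$. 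Proposition~\ref{equivalence}(a) now concludes.

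The main obstacle is the step $\beta(\ze(\ga)) \subseteq \ze(\he)$: it is the only place in the converse where the image hypothesis plays a substantive role, whereas injectivity of $\gamma$ and the final invocation of Proposition~\ref{equivalence}(a) rest solely on the kernel hypothesis. The centrality argument is symmetric in the four AWB operations and so reduces to running essentially the same short calculation four times.
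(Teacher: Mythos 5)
Your proof is correct and follows essentially the route the paper intends: the paper omits this proof, deferring to the Leibniz-algebra analogue in \cite{BC}, which proceeds by exactly this reduction to Proposition~\ref{equivalence}(a) after verifying that $\beta(\ze(\ga))\subseteq\ze(\he)$ so that the induced map $\gamma$ on central quotients is well defined. Your two auxiliary verifications (centrality of $\beta(\ze(\ga))$ from the image hypothesis, injectivity of $\gamma$ from the kernel hypothesis) are the right ones and are carried out correctly.
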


\begin{proposition} \label{composition}
For a homomorphism of central extensions $(\alpha, \beta, \gamma) : (G_1) \to (G_2)$, the following statements hold:
\begin{enumerate}
\item[a)]  A homomorphism $(\alpha, \beta, \gamma) : (G_1) \to (G_2)$ is  isoclinic if and only if $\gamma$ is an isomorphism and $\beta : \ga_1 \to \ga_2$ is an isoclinic homomorphism of \textup{AWB}'s.
\item[b)] The composition of isoclinic homomorphisms is an isoclinic homomorphism.
\item[c)] Each isoclinic homomorphism is a composition of an isoclinic epimorphism and an isoclinic monomorphism.
\end{enumerate}
\end{proposition}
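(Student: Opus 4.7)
The plan is to combine Propositions \ref{equivalence} and \ref{character} so that every assertion reduces to checking the two conditions $\Ker(\beta)\cap[[\ga,\ga]]=0$ and $\im(\beta)+\ze(\he)=\he$ that define an isoclinic homomorphism of \textup{AWB}'s.

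For part \textit{a)}, I fix $\gamma$ to be an isomorphism (it is forced to be so on either side). Proposition \ref{equivalence}\,\textit{a)} then says $(\alpha,\beta,\gamma)$ is isoclinic iff $\Ker(\beta)\cap[[\ga_1,\ga_1]]=0$, while Proposition \ref{character} says $\beta\colon\ga_1\to\ga_2$ is isoclinic as an \textup{AWB} homomorphism iff that same kernel condition holds together with $\im(\beta)+\ze(\ga_2)=\ga_2$. Hence the whole matter reduces to observing that, in the central-extension setup, this image condition is automatic. A short diagram chase does it: given $g_2\in\ga_2$, the surjectivity of $\gamma$ and of $\pi_1$ produces $g_1\in\ga_1$ with $\gamma(\pi_1(g_1))=\pi_2(g_2)$, and commutativity of the homomorphism diagram forces $g_2-\beta(g_1)\in\Ker(\pi_2)=\chi_2(\ene_2)\subseteq\ze(\ga_2)$, so $g_2\in\im(\beta)+\ze(\ga_2)$.

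For part \textit{b)}, given isoclinic homomorphisms $\beta_1\colon\ga_1\to\ga_2$ and $\beta_2\colon\ga_2\to\ga_3$, I verify the two defining conditions for $\beta_2\circ\beta_1$. The kernel condition is direct: if $m\in\Ker(\beta_2\circ\beta_1)\cap[[\ga_1,\ga_1]]$, then $\beta_1(m)\in[[\ga_2,\ga_2]]\cap\Ker(\beta_2)=0$, hence $m\in\Ker(\beta_1)\cap[[\ga_1,\ga_1]]=0$. The image condition hinges on the preliminary observation $\beta_2(\ze(\ga_2))\subseteq\ze(\ga_3)$: for $z\in\ze(\ga_2)$ and $g\in\ga_3$, write $g=\beta_2(y)+z'$ with $z'\in\ze(\ga_3)$ (using $\im(\beta_2)+\ze(\ga_3)=\ga_3$); then $\beta_2(z)\,g=\beta_2(z)\beta_2(y)+\beta_2(z)z'=\beta_2(zy)+0=0$, and the other three products vanish analogously. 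Granting this, $\ga_3=\im(\beta_2)+\ze(\ga_3)\subseteq\beta_2(\im(\beta_1)+\ze(\ga_2))+\ze(\ga_3)\subseteq\im(\beta_2\circ\beta_1)+\ze(\ga_3)$, as required.

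For part \textit{c)}, the canonical factorization $\beta=\iota\circ\pi$, with $\pi\colon\ga_1\to\im(\beta)$ the corestriction and $\iota\colon\im(\beta)\to\ga_2$ the inclusion, does the job. The surjection $\pi$ inherits $\Ker(\pi)\cap[[\ga_1,\ga_1]]=\Ker(\beta)\cap[[\ga_1,\ga_1]]=0$ from $\beta$, and its image condition is trivial because $\pi$ maps onto $\im(\beta)$; hence $\pi$ is an isoclinic epimorphism. The inclusion $\iota$ is injective, so its kernel condition is vacuous, and its image condition is precisely $\im(\beta)+\ze(\ga_2)=\ga_2$, which is inherited from $\beta$. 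The only mildly delicate step in the whole argument is the centrality lemma $\beta_2(\ze(\ga_2))\subseteq\ze(\ga_3)$ used in \textit{b)}: it is not a formal property of \textup{AWB} morphisms, and it is the one place where the image part of the definition of isoclinic homomorphism does genuine work.
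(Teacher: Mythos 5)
Your proof is correct, and it follows exactly the route the paper intends: the paper omits the argument, deferring to the Leibniz-algebra analogue in \cite{BC}, and your reduction of all three parts to the two conditions $\Ker(\beta)\cap[[\ga,\ga]]=0$ and $\im(\beta)+\ze(\he)=\he$ via Propositions \ref{equivalence} and \ref{character} is precisely the expected adaptation (including the two genuinely non-formal points you isolate: that the image condition is automatic for a homomorphism of central extensions with $\gamma$ invertible, and that $\beta_2(\ze(\ga_2))\subseteq\ze(\ga_3)$ follows from the image condition on $\beta_2$).
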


\begin{proposition} \label{extra}
Let $\ga$ be an \textup{AWB}.
\begin{enumerate}
\item[a)] If $\as$ is an \textup{AWB} with trivial commutator, then $\ga \sim \ga \times \as$.
\item[b)] If ${\sf I}$ is a two-sided ideal of $\ga$, then $\ga/{\sf I} \sim \ga/({\sf I}\cap [[\ga, \ga]])$. In particular, $\ga \sim \ga/{\sf I}$ if and only if ${\sf I}\cap [[\ga, \ga]]=0$.
\item[c)] If $\he$ is a subalgebra of $\ga$, then $\he \sim \he + \ze(\ga)$. Moreover, $\he \sim \ga$ if and only if $\he + \ze(\ga)=\ga$.
\item[d)] If $\ga_i \sim \ga'_i$, $i=1,2$, then $\ga_1\times \ga_2 \sim \ga'_1\times \ga'_2$.
\end{enumerate}
\end{proposition}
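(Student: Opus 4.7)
The unifying tool throughout will be Proposition \ref{character}, which reduces each isoclinism assertion $\ga \sim \he$ to verifying for an appropriate homomorphism $\beta:\ga\to\he$ that $\Ker(\beta)\cap[[\ga,\ga]]=0$ and $\im(\beta)+\ze(\he)=\he$. In each of the four items I will exhibit a natural morphism and check these two conditions.

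For {\it a)}, I will take the canonical inclusion $\iota:\ga\hookrightarrow\ga\times\as$, $g\mapsto(g,0)$. Injectivity gives the kernel condition automatically. The hypothesis $[[\as,\as]]=0$ forces $\as$ to be abelian, hence $\as=\ze(\as)$ and $\ze(\ga\times\as)=\ze(\ga)\times\as$, so that $\im(\iota)+\ze(\ga\times\as)=\ga\times\as$. For {\it b)}, I will apply the same proposition to the projection $\pi:\ga/({\sf I}\cap[[\ga,\ga]])\twoheadrightarrow\ga/{\sf I}$: its surjectivity handles the image-plus-center condition, while its kernel ${\sf I}/({\sf I}\cap[[\ga,\ga]])$ meets the derived subalgebra $[[\ga,\ga]]/({\sf I}\cap[[\ga,\ga]])$ of the source only in $0$. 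The ``in particular'' clause will follow by applying the proposition directly to $\ga\twoheadrightarrow\ga/{\sf I}$, whose kernel is $\sf I$ and whose intersection with $[[\ga,\ga]]$ is therefore ${\sf I}\cap[[\ga,\ga]]$.

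For {\it c)}, the main assertion will follow from the inclusion $j:\he\hookrightarrow\he+\ze(\ga)$: injectivity is immediate and the containment $\ze(\ga)\subseteq\ze(\he+\ze(\ga))$ yields $j(\he)+\ze(\he+\ze(\ga))=\he+\ze(\ga)$. The ``if'' half of the moreover will then be immediate by transitivity. The ``only if'' direction is where I expect the main obstacle: using Proposition \ref{equivalence relation} together with the first part, the problem reduces to showing that whenever $\he+\ze(\ga)$, regarded as a subalgebra of $\ga$, is isoclinic to $\ga$, one must have $\he+\ze(\ga)=\ga$. The subtle point is that an abstract isoclinism need not be induced by the inclusion, so I will have to combine the isomorphism on central quotients with the transport formulas of Proposition \ref{3.6}{\it c)} to exhibit, for each $g\in\ga$, a representative in $\he+\ze(\ga)$ modulo $\ze(\ga)$, mimicking the Leibniz-algebra argument of \cite{BC}.

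For {\it d)}, given isoclinisms $(\eta_i,\xi_i):\ga_i\sim\ga'_i$, I will combine them componentwise. The identities $\ze(\ga_1\times\ga_2)=\ze(\ga_1)\times\ze(\ga_2)$ and $[[\ga_1\times\ga_2,\ga_1\times\ga_2]]=[[\ga_1,\ga_1]]\times[[\ga_2,\ga_2]]$, both immediate from the componentwise operations, will ensure that $(\eta_1\times\eta_2,\xi_1\times\xi_2)$ is a well-defined pair of isomorphisms between the correct objects, and the commutativity of diagram (\ref{square isoclinic}) will follow factor by factor from the corresponding diagrams for each $(\eta_i,\xi_i)$.
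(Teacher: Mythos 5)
The paper gives no proof of this proposition at all --- it defers to the Leibniz-algebra analogue in \cite{BC} --- so there is nothing to compare line by line. Your strategy of funnelling every item through Proposition \ref{character} is the natural substitute and is surely what is intended. Parts a) and d), the isoclinism $\ga/{\sf I} \sim \ga/({\sf I}\cap [[\ga,\ga]])$ in b), the isoclinism $\he \sim \he+\ze(\ga)$ in c), and the ``if'' halves of the two biconditionals are all handled correctly and completely.

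The genuine gap is in the two ``only if'' halves. In b) you apply Proposition \ref{character} to the canonical projection $\ga \twoheadrightarrow \ga/{\sf I}$; but that proposition is a biconditional about \emph{that particular map}, so what you actually obtain is ``the projection is an isoclinic homomorphism iff ${\sf I}\cap[[\ga,\ga]]=0$'', which does not exclude a non-canonical isoclinism $\ga\sim\ga/{\sf I}$ when ${\sf I}\cap[[\ga,\ga]]\neq 0$. This is exactly the subtlety you correctly flag in c), yet there you only announce a plan (transport via Proposition \ref{3.6}~c)) without executing it, so that half too remains unproven. Moreover, no argument from the stated tools can close either half without a finiteness hypothesis: take $\ga$ with zero associative product and bracket that of a countable direct sum of Heisenberg algebras, $[x_i,y_i]=z_i=-[y_i,x_i]$ ($i\geq 1$). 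Shifting indices yields an isoclinism $\ga\sim\ga/\langle z_1\rangle$ although $\langle z_1\rangle\cap[[\ga,\ga]]\neq 0$, and an isoclinism onto $\ga$ from the proper subalgebra $\he$ spanned by all $z_i$ together with the $x_i,y_i$ for $i\geq 2$, for which $\he+\ze(\ga)=\he\neq\ga$. In the finite-dimensional case both claims do follow by a dimension count --- for b) from $\dim [[\ga/{\sf I},\ga/{\sf I}]]=\dim[[\ga,\ga]]-\dim({\sf I}\cap[[\ga,\ga]])$ together with $[[\ga,\ga]]\cong[[\ga/{\sf I},\ga/{\sf I}]]$, and for c) by comparing $\dim\,(\he+\ze(\ga))/\ze(\he+\ze(\ga))$ with $\dim\,\ga/\ze(\ga)$ using $\ze(\ga)\subseteq\ze(\he+\ze(\ga))$ --- so you should either add that hypothesis explicitly or replace the proposed reductions by such counting arguments.
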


The following result states that any two isoclinic \textup{AWB}'s $\ga_1$ and $\ga_2$ have a common isoclinic ancestor $\ga'$, i.e. $\ga_1$ and $\ga_2$ can be realized as quotients of the \textup{AWB} $\ga'$, while $\ga'$, $\ga_1$ and $\ga_2$ are in the same isoclinism family.
Also they have a common isoclinic descendant $\ga''$, i.e. $\ga_1$ and $\ga_2$ can be realized as subalgebras
of the \textup{AWB} $\ga''$, where $\ga''$, $\ga_1$ and $\ga_2$ are isoclinic to each other.

\begin{proposition} \label{quotient}
The following statements are equivalent:
\begin{enumerate}
\item[a)] The central extensions $(G_1)$ and $(G_2)$ are isoclinic.
\item[b)] There exists a central extension $(G')$ together with isoclinic epimorphisms from $(G')$ onto $(G_1)$ and $(G_2)$. In particular $\ga'$ contains two-sided ideals $\ga'_1$, $\ga'_2$ such that
\[\ga_1 \cong \ga'/\ga'_1\sim \ga'\sim \ga'/\ga'_2\cong\ga_2.\]
\item[c)] There exists a central extension $(G'')$ together with isoclinic monomorphisms from $(G_1)$ and $(G_2)$ into $(G'')$. In particular $\ga''$ contains subalgebras ${\ga}''_1$, ${\ga}''_2$ such that
\[\ga_1 \cong G''_1\sim \ga''\sim G''_2\cong\ga_2.\]
\end{enumerate}
\end{proposition}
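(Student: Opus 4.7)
The plan is to close the cycle by proving (b) $\Rightarrow$ (a), (c) $\Rightarrow$ (a), (a) $\Rightarrow$ (b) and (a) $\Rightarrow$ (c). The implications to (a) are essentially free: in both (b) and (c) the hypothesis provides isoclinisms $(G') \sim (G_i)$, respectively $(G_i) \sim (G'')$, via the given isoclinic epimorphisms or monomorphisms, and Proposition \ref{equivalence relation} (symmetry and transitivity of $\sim$) yields $(G_1) \sim (G_2)$. The ``in particular'' chains in (b) and (c) then follow, since an isoclinic epi has central kernel $\ga'_i$ with $\ga_i \cong \ga'/\ga'_i$, and dually an isoclinic mono identifies $\ga_i$ with its image $\ga''_i \subseteq \ga''$.

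For (a) $\Rightarrow$ (b), given an isoclinism $(\eta, \xi) : (G_1) \sim (G_2)$, I would take $(G')$ to be the pullback along $\eta$, with
\[
\ga' \;=\; \{(g_1, g_2) \in \ga_1 \times \ga_2 \mid \eta(\pi_1(g_1)) = \pi_2(g_2)\}
\]
equipped with the componentwise AWB structure; the sequence
\[
0 \longrightarrow \ene_1 \times \ene_2 \longrightarrow \ga' \longrightarrow \qu_1 \longrightarrow 0,
\]
with inclusion $(n_1, n_2) \mapsto (\chi_1(n_1), \chi_2(n_2))$ and projection $(g_1, g_2) \mapsto \pi_1(g_1)$, is central. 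The AWB projections $p_i : \ga' \to \ga_i$ are epimorphisms with central kernels $\ga'_1 = \{0\} \oplus \chi_2(\ene_2)$ and $\ga'_2 = \chi_1(\ene_1) \oplus \{0\}$. The crux of the argument is the computation
\[
[[\ga', \ga']] \;=\; \{\,(x, \xi(x)) : x \in [[\ga_1, \ga_1]]\,\},
\]
the graph of $\xi$, which follows directly from the commutative diagram \eqref{square isoclinic} describing the isoclinism. This graph meets each $\Ker(p_i)$ only at $0$, so Proposition \ref{equivalence}(a) lifts $p_i$ to an isoclinic epimorphism of central extensions $(G') \twoheadrightarrow (G_i)$, giving the chain $\ga_1 \cong \ga'/\ga'_1 \sim \ga' \sim \ga'/\ga'_2 \cong \ga_2$.

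For (a) $\Rightarrow$ (c), the construction of a common descendant is dual to (b) and is where I expect the principal difficulty. After reducing (via Proposition \ref{backward}) to the case $\qu_1 = \qu_2 = \qu$ and $\eta = \id_\qu$, one would like to realize $\ga''$ as a quotient of $\ga_1 \oplus \ga_2$ by a two-sided ideal $D$ that identifies $[[\ga_1, \ga_1]]$ with $[[\ga_2, \ga_2]]$ through $\xi$, in such a way that the compositions $\iota_i : \ga_i \hookrightarrow \ga_1 \oplus \ga_2 \twoheadrightarrow (\ga_1 \oplus \ga_2)/D$ become injective AWB homomorphisms with $\iota_i(\ga_i) + \ze(\ga'') = \ga''$; Proposition \ref{equivalence}(a) would then promote the $\iota_i$ to isoclinic monomorphisms. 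The obstacle is that the naive antidiagonal $\{(x, -\xi(x)) : x \in [[\ga_1, \ga_1]]\}$ is not an ideal of $\ga_1 \oplus \ga_2$, because Proposition \ref{3.6}(c) only governs $\xi(x g_1)$ through a specific lift $h \in \ga_2$ of $\pi_1(g_1)$ rather than through an arbitrary second coordinate $g_2$. The fix is to enlarge $D$ by correction terms built from a $\mathbb{K}$-linear section $\sigma : \qu \to \ga_2$ of $\pi_2$, ensuring closure of $D$ under both product and bracket while remaining small enough that $D \cap (\ga_1 \oplus \{0\}) = 0 = D \cap (\{0\} \oplus \ga_2)$, so that the inclusions $\iota_i$ stay injective; the central extension $(G'')$ so obtained has the required properties.
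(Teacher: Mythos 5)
Your treatment of a) $\Leftrightarrow$ b) is essentially the paper's: the same pullback $\ga'=\{(g_1,g_2)\mid \eta(\pi_1(g_1))=\pi_2(g_2)\}$, the same observation that $[[\ga',\ga']]$ is the graph of $\xi$ over $[[\ga_1,\ga_1]]$ (so it meets both kernels trivially), and the same appeal to Proposition \ref{equivalence} a); the implications b) $\Rightarrow$ a) and c) $\Rightarrow$ a) are likewise handled as in the paper. The problem is a) $\Rightarrow$ c), which you yourself flag as the principal difficulty and then do not actually prove. You propose $\ga''=(\ga_1\oplus\ga_2)/D$ with $D$ an enlargement of the antidiagonal $\{(x,-\xi(x)):x\in[[\ga_1,\ga_1]]\}$ by ``correction terms built from a linear section $\sigma:\qu\to\ga_2$,'' but you never exhibit such a $D$, never verify it is a two-sided ideal closed under both operations, and never check the two competing constraints ($D$ large enough to be an ideal identifying the commutators, yet small enough that $D\cap(\ga_1\oplus 0)=0=D\cap(0\oplus\ga_2)$ and that $\iota_i(\ga_i)+\ze(\ga'')=\ga''$). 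Asserting that a fix of this shape exists is not an argument; the existence of a common isoclinic descendant is exactly what is to be proved, and nothing in Propositions \ref{3.6} or \ref{equivalence} guarantees that the correction terms can be chosen coherently.

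The paper sidesteps this obstruction entirely by a different construction: it sets ${\sf V}=\ga'/\ga'_1\times \ga'/[[\ga',\ga']]$, i.e.\ (a copy of) $\ga_1$ times the \emph{abelianization} of the common ancestor $\ga'$, and quotients by the diagonal copy ${\sf W}=\{((n_1,0)+\ga'_1,(n_1,0)+[[\ga',\ga']])\mid n_1\in\ene_1\}$ of $\ene_1$. Because the second factor has trivial commutator, $[[{\sf V},{\sf V}]]$ lives in the first factor, and ${\sf W}\cap[[{\sf V},{\sf V}]]=0$ follows at once from $[[\ga',\ga']]\cap\Ker(\tau_2)=0$; there is no ideal-closure problem to repair. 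The embeddings $\delta_1(g_1)=((g_1,g_2)+\ga'_1,\,0+[[\ga',\ga']])+{\sf W}$ and $\delta_2(g_2)=((g_1,g_2)+\ga'_1,\,(g_1,g_2)+[[\ga',\ga']])+{\sf W}$ are then checked to be monomorphisms with $\delta_i(\ga_i)+\ze(\ga'')=\ga''$, and Proposition \ref{equivalence} a) (together with Proposition \ref{extra} c)) finishes the argument. If you want to complete your proof along your own lines you would have to produce $D$ explicitly and carry out all of these verifications; otherwise you should replace that paragraph with a construction of this type, which reuses the ancestor $\ga'$ you have already built in part b).
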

\begin{proof}
{\it a)} $\Leftrightarrow$ {\it b)} Let $(G_1) \sim (G_2)$ and $(\eta, \xi)$ be an isoclinism between $(G_1) \sim (G_2)$. Consider a subalgebra $\ga'$ of $\ga_1\times\ga_2$ given by
\[{\ga}' =\{(g_1,g_2), g_i\in {\ga}_i, i=1,2 \mid \eta(\pi_1(g_1))=\pi_2(g_2)\}\]
Evidently the following diagrams are commutative for $i=1,2$:
\begin{equation} \label{widetilde}
\xymatrix{
(G') : 0 \ar[r] & {\ene}_1 \times {\ene}_2 \ar[r]^{\lambda} \ar[d]^{\sigma_i} & {\ga}' \ar[r]^{\rho} \ar[d]_{\tau_i}  & {\qu} \ar[r] \ar[d]_{\gamma_i} & 0\\
(G_i) : 0 \ar[r] & {\ene}_i \ar[r]^{\chi_i} & {\ga}_i \ar[r]^{\pi_i}  & {\qu}_i \ar[r] & 0
}
\end{equation}
where $\qu=\qu_1$, $\lambda=\chi_1\times \chi_2$, $\rho(g_1,g_2)=\pi_1(g_1)$, $\sigma_i : \ene_1 \times \ene_2\rightarrow \ene_i$, and $\tau_i : \ga'\rightarrow \ga_i$ is the $i$-th projection ($i = 1, 2$). Also we set $\gamma_1=\id_{\qu}$ and $\gamma_2=\eta$.
An easy computation shows that $(G')$ is a central extension.

Clearly ${\Ker}(\tau_1)=\{(0, g_2), g_2\in {\ene}_2\}$ and ${\Ker}(\tau_2)=\{(g_1,0), g_1\in {\ene}_1\}$. Put $\ga'_i=\Ker(\tau_i)$, $i=1,2$. Then  $\ga_1 \cong \ga'/\ga'_1$ and $\ga_2 \cong \ga'/\ga'_2$. Using the definition of isoclinism and Proposition \ref{3.6}, we conclude that every generator of the subalgebra $[[\ga',\ga']]$ is a linear combination of terms of  the form $([g_1,h_1]+a_1b_1,\xi([g_1,h_1]+a_1b_1))$ in which $g_1,h_1,a_1,b_1\in \ga_1$. It is readily to see that $[[\ga',\ga']]\cap \Ker(\tau_i)=0$, $i=1,2$. Thus Proposition \ref{equivalence} {\it a)} implies that $(\sigma_i,\tau_i,\gamma_i) : (G') \sim (G_i)$, $i=1,2$, and the assertion $\ga_1 \cong \ga'/\ga'_1\sim \ga'\sim \ga'/\ga'_2\cong\ga_2$ follows by Proposition \ref{extra} {\it b)}.

For proving ``if" part, according to diagram (\ref{widetilde}), note that the isomorphism $\eta : \qu_1 \rightarrow \qu_2$ induces an isoclinism from $(G_1)$ to $(G_2)$ if and only if $(\sigma_i,\tau_i,\gamma_i)$, $i=1,2$, are isoclinic epimorphisms by the adaptation of \cite[Proposition 3.14]{BC} to the framework of \textup{AWB}'s.

{\it a)} $\Leftrightarrow$ {\it c)} First, let ${\sf V}={\ga}'/{\ga}_1'\times {\ga}'/[[{\ga}',{\ga}']]$ and ${\sf W}=\{((n_1,0)+{\ga}_1',(n_1,0)+[[{\ga}',{\ga}']]) \mid n_1\in \ene_1\}$. It is easily verified that ${\sf W}$ is a two-sided ideal of ${\sf V}$ such that ${\sf W} \cap [[{\sf V},{\sf V}]]=0$. We define two homomorphisms $\delta_i : {\ga}_i \rightarrow {\sf V/W}, i =1, 2,$  as follows.

First, let $g_1\in \ga_1$ and choose $g_2\in\ga_2$ such that $(g_1, g_2) \in \ga'$. Then set $\delta_1(g_1)=((g_1, g_2)+{\ga}_1',0+[[\ga',\ga']])+{\sf W}$. Secondly, let $g_2\in\ga_2$ and choose $g_1\in \ga_1$ such that $(g_1, g_2) \in {\ga}'$. Then set $\delta_2(g_2)=((g_1, g_2)+{\ga}_1',(g_1, g_2)+[[{\ga}',{\ga}']])+{\sf W}$. One could easily show that $\delta_1$ and $\delta_2$ are monomorphisms. Take $\ga''=V/W$. Evidently the following diagram is commutative:
\begin{equation} \label{widetilde2}
\xymatrix{
(G_1) : 0 \ar[r] & \ene_1 \ar[r]^{\chi_1} \ar[d]^{{\delta_1}{\mid_{\ene_1}}} & \ga_1  \ar[d]_{\delta_1} \ar[r]^{\pi_1}  & \qu_1 \ar[r] \ar[d]_{\id_{\qu_1}} & 0\\
(G'') : 0 \ar[r] & {\sf V}'/{\sf W} \ar[r] & {\ga}'' \ar[r] & \qu_1 \ar[r] & 0
}
\end{equation}
where ${\sf V}'=\left({\ene}_1\times {\ene}_2\right)/{\ga}_1' \times {\ga}'/[[{\ga}',{\ga}']]$. Similarly the diagram
\begin{equation} \label{widetilde3}
\xymatrix{
(G_2) : 0 \ar[r] & \ene_2 \ar[r]^{\chi_2} \ar[d]^{{\delta_2}{\mid_{\ene_2}}} & \ga_2  \ar[d]_{\delta_2} \ar[r]^{\pi_2}  & \qu_2 \ar[r] \ar[d]_{\eta^{-1}} & 0\\
(G'') : 0 \ar[r] & {\sf V}'/{\sf W} \ar[r] & {\ga}'' \ar[r] & \qu_1 \ar[r] & 0
}
\end{equation}
 is commutative. Thus Proposition \ref{equivalence} {\it a)} concludes the first assertion.

Finally, as we can show that
\[\delta_1(\ga_1)+Z({\ga}'') = {\ga}'' = \delta_2(\ga_2)+Z({\ga}''),\]
then, putting ${\ga}''_1=\delta_1(\ga_1)$ and $\ga''_2=\delta_2(\ga_2)$, the second assertion follows by Proposition \ref{extra} {\it c)}.
The proof of ``if" part, is a direct consequence of Proposition \ref{equivalence relation}.
\end{proof}

The following Corollary is an immediate consequence of  Proposition \ref{quotient}.
\begin{corollary} \label{equivalent2}
Let ${\ga}_1$ and  ${\ga}_2$ be \textup{AWB}'s. Then the following statements are equivalent:
\begin{enumerate}
\item[a)]  ${\ga}_1$ and  ${\ga}_2$ are isoclinic.
\item[b)] There exist an \textup{AWB} with trivial commutator ${\as}$, a subalgebra ${\he}$ of ${\ga}_1\times {\as}$ with ${\he}+{\sf Z}({\ga}_1\times {\as})={\ga}_1\times {\as}$, and a two-sided ideal ${\ene}$ of ${\he}$ with ${\ene} \cap [[{\he},{\he}]] = 0$, such that ${\he}/{\ene}$ is isomorphic to ${\ga}_2$.
\item[c)] There exist an \textup{AWB} with trivial commutator ${\be}$, a two-sided ideal ${\me}$ of ${\ga}_1 \times {\be}$ with ${\me} \cap [[{\ga}_1\times {\be},{\ga}_1\times {\be}]] = 0$, and a subalgebra ${\ia}$ of $({\ga}_1 \times {\be})/{\me}$, with ${\ia}+ {\sf Z}(({\ga}_1 \times {\be})/{\me})=({\ga}_1 \times {\be})/{\me}$, such that ${\ia}$ is isomorphic to ${\ga}_2$.
\end{enumerate}
\end{corollary}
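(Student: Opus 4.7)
The plan is to derive the three equivalences by combining Propositions \ref{extra} and \ref{quotient} with transitivity of isoclinism (Proposition \ref{equivalence relation}).

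For the backward directions $(b) \Rightarrow (a)$ and $(c) \Rightarrow (a)$, I would assemble a direct chain of isoclinisms from Proposition \ref{extra}. Assuming (b):
\[
\ga_1 \sim \ga_1 \times \as \sim \he \sim \he/\ene \cong \ga_2,
\]
using parts (a), (c), (b) of Proposition \ref{extra} in turn. Assuming (c):
\[
\ga_1 \sim \ga_1 \times \be \sim (\ga_1 \times \be)/\me \sim \ia \cong \ga_2,
\]
using the same three parts in the order (a), (b), (c).

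For the forward direction $(a) \Rightarrow (c)$, I would simply re-read the construction already present in the proof of Proposition \ref{quotient}(c). There, the common descendant $\ga''$ is realized as $V/W$ with $V = \ga'/\ga'_1 \times \ga'/[[\ga', \ga']]$. Under the canonical isomorphism $\ga'/\ga'_1 \cong \ga_1$, one recognizes $V \cong \ga_1 \times \be$ with $\be := \ga'/[[\ga', \ga']]$ abelian (hence of trivial commutator), while $W$ is a two-sided ideal satisfying $W \cap [[V, V]] = 0$ (both already verified in that proof). Setting $\me := W$ and $\ia := \delta_2(\ga_2) \cong \ga_2$, the condition $\ia + \ze((\ga_1 \times \be)/\me) = (\ga_1 \times \be)/\me$ is exactly what the proof of Proposition \ref{quotient}(c) asserts.

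For $(a) \Rightarrow (b)$, I would invoke Proposition \ref{quotient}(b) to obtain the common ancestor $\ga'$ with isoclinic epimorphisms $\tau_i : \ga' \to \ga_i$ satisfying $\Ker(\tau_i) \cap [[\ga', \ga']] = 0$. Take $\as := \ga'/[[\ga', \ga']]$ (abelian, so of trivial commutator) and define
\[
\phi : \ga' \to \ga_1 \times \as, \quad g \mapsto (\tau_1(g),\, g + [[\ga', \ga']]).
\]
This is a homomorphism (products and brackets in $\ga'$ land in $[[\ga', \ga']]$, which vanishes in $\as$), and its injectivity reduces exactly to $\Ker(\tau_1) \cap [[\ga', \ga']] = 0$. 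Setting $\he := \phi(\ga')$ and $\ene := \phi(\Ker(\tau_2))$, the conditions $\he + \ze(\ga_1 \times \as) = \ga_1 \times \as$ (from surjectivity of $\tau_1$ together with $\as \subseteq \ze(\ga_1 \times \as)$), $\ene \cap [[\he, \he]] = \phi(\Ker(\tau_2) \cap [[\ga', \ga']]) = 0$, and $\he/\ene \cong \ga'/\Ker(\tau_2) \cong \ga_2$ follow by routine verification. The main obstacle is precisely these three intersection-with-commutator checks, each of which ultimately hinges on the vanishing $\Ker(\tau_i) \cap [[\ga', \ga']] = 0$ supplied by Proposition \ref{quotient}(b) --- the defining property of isoclinic epimorphisms recorded in Proposition \ref{equivalence}(a).
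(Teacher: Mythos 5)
Your proposal is correct and follows essentially the same route as the paper: both forward directions are read off from the common ancestor/descendant constructions $\ga'$ and ${\sf V}/{\sf W}$ in Proposition \ref{quotient} (your map $\phi$ is exactly the paper's isomorphism $\delta$ composed with $\ga'/\ga'_1\cong\ga_1$), with ${\as}={\be}=\ga'/[[\ga',\ga']]$, ${\ene}=\ga'_2$, ${\me}={\sf W}$, ${\ia}=\ga''_2$. The only difference is that you explicitly write out the backward implications as chains of isoclinisms via Proposition \ref{extra} and transitivity, which the paper leaves implicit; that is a harmless (and welcome) elaboration, not a different method.
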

\begin{proof}
{\it a)} $\Leftrightarrow$ {\it b)} Assume that ${\ga}_1 \sim {\ga}_2$. Take ${\as} = {\ga}'/[[{\ga}',{\ga}']]$, ${\he} = \{(g'+{\ga}_1',g'+[[{\ga}',{\ga}']])\mid g'\in {\ga}'\}$ as a subalgebra of ${\ga}'/{\ga}_1' \times {\ga}'/[[{\ga}',{\ga}']]\cong {\ga}_1 \times {\as}$. Suppose $(g'+\ga_1',h'+[[\ga',\ga']])$ is an arbitrary element of $\ga_1\times \as$. It is obvious that
\[(g'+{\ga}_1',h'+[[{\ga}',{\ga}']])=(g'+{\ga}_1',g'+[[{\ga}',{\ga}']])+({\ga}_1',(h'-g')+[[{\ga}',{\ga}']])\in H+Z({\ga}_1\times {\as}).\]
Since the reverse containment follows immediately, then $\he+Z(\ga_1\times \as)=\ga_1\times \as$. Clearly the map $\delta : {\ga}' \rightarrow {\he}$ given by $\delta(g')=(g'+{\ga}_1',g'+[[{\ga}',{\ga}']])$ is an isomorphism. So taking ${\ene} = {\ga}'_2$ in Proposition \ref{quotient} {\it b)} it follows that ${\he}/{\ene} \cong {\ga}_2$.

{\it a)} $\Leftrightarrow$ {\it c)} Assume that ${\ga}_1\sim {\ga}_2$. Take ${\be} = {\ga}'/[[{\ga}',{\ga}']]$, ${\me}= {\sf W}$, and ${\ia}={\ga}''_2$ in Proposition \ref{quotient} {\it c)}.
\end{proof}


\section{Isoclinism and the Schur multiplier of \textup{AWB}} \label{Schur}

In this section we analyze the connection between isoclinism and the  first homology with trivial coefficients of algebras with bracket, which we call  the Schur multiplier thanks to the isomorphism (\ref{Hopf}).

\begin{lemma} \label{A}
Let $(\alpha, \beta, \gamma):(G_1) \to (G_2)$ be a homomorphism of central extensions and assume that $\gamma : \qu_1 \to \qu_2$ is an isomorphism. Then the following statements are equivalent:
\begin{enumerate}
\item[a)] $(\alpha, \beta, \gamma)$ is an isoclinism.
\item[b)] ${\sf H_1^{\AWB}}(\gamma) \left( {\Ker}(\theta(G_1)) \right) = {\Ker}(\theta(G_2))$.
\end{enumerate}
 \end{lemma}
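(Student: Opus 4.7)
The plan is to use Proposition \ref{equivalence}(a) to rephrase (a) as the condition $\Ker(\beta) \cap [[{\ga}_1, {\ga}_1]] = 0$, and to use the naturality of the five-term sequence (\ref{five-term}) to rephrase (b) as an injectivity statement about a restriction of $\alpha$; the two reformulations will then coincide. As a preliminary observation, since $\gamma$ is an isomorphism, the equality $\gamma \circ \pi_1 = \pi_2 \circ \beta$ forces $\Ker(\beta) \subseteq \chi_1({\ene}_1)$, so $\Ker(\beta) \cap [[{\ga}_1, {\ga}_1]] = \Ker(\alpha) \cap [[{\ga}_1, {\ga}_1]]$ once $\chi_i$ is identified with inclusion. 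Hence by Proposition \ref{equivalence}(a), statement (a) is equivalent to the injectivity of $\alpha$ restricted to $\chi_1({\ene}_1) \cap [[{\ga}_1, {\ga}_1]]$.

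Next I would unpack (b). Exactness of (\ref{five-term}) together with the centrality hypothesis $[[{\ene}_i, {\ga}_i]] = 0$ identifies $\im(\theta(G_i))$ with the kernel of ${\ene}_i \to {\sf H_0^{\AWB}}({\ga}_i) = {\ga}_i/[[{\ga}_i,{\ga}_i]]$, namely $\chi_i({\ene}_i) \cap [[{\ga}_i, {\ga}_i]]$; so each $\theta(G_i)$ descends to an isomorphism $\bar\theta_i : {\sf H_1^{\AWB}}({\qu}_i)/\Ker\theta(G_i) \stackrel{\sim}{\to} \chi_i({\ene}_i) \cap [[{\ga}_i, {\ga}_i]]$. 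Using the naturality relation $\alpha \circ \theta(G_1) = \theta(G_2) \circ {\sf H_1^{\AWB}}(\gamma)$ together with the fact that ${\sf H_1^{\AWB}}(\gamma)$ is an isomorphism (because $\gamma$ is), one obtains $\alpha(\chi_1({\ene}_1) \cap [[{\ga}_1,{\ga}_1]]) = \chi_2({\ene}_2) \cap [[{\ga}_2,{\ga}_2]]$; in other words, the restriction of $\alpha$ in play is \emph{automatically surjective}. Statement (b) asks for ${\sf H_1^{\AWB}}(\gamma)$ to send $\Ker\theta(G_1)$ exactly onto $\Ker\theta(G_2)$; naturality gives the inclusion ``$\subseteq$'' for free, and since ${\sf H_1^{\AWB}}(\gamma)$ is bijective, (b) is equivalent to the induced map on the quotients ${\sf H_1^{\AWB}}({\qu}_i)/\Ker\theta(G_i)$ being injective; translating through the $\bar\theta_i$'s, this says precisely that the restriction of $\alpha$ above is injective.

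Combining these two observations, (b) is equivalent to $\Ker(\alpha) \cap [[{\ga}_1, {\ga}_1]] = 0$, which agrees with the reformulation of (a), and the equivalence follows. The delicate point I anticipate is the identification $\im\theta(G_i) = \chi_i({\ene}_i) \cap [[{\ga}_i, {\ga}_i]]$, which hinges on the centrality of $(G_i)$ so that the relevant term of (\ref{five-term}) is really ${\ene}_i$ and not a proper quotient; once this is set up, everything else is a short chase on the naturality square of (\ref{five-term}), with the automatic surjectivity of the restriction of $\alpha$ being the ingredient that makes both implications fall out together.
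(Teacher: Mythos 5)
Your proof is correct and follows essentially the same route as the paper: both rest on the naturality of the five-term sequence (\ref{five-term}), the identification ${\im}(\theta(G_i)) = \chi_i({\ene}_i)\cap[[{\ga}_i,{\ga}_i]]$, and the criterion of Proposition \ref{equivalence}. The only (cosmetic) difference is that you carry the injectivity condition on the restriction of $\alpha$ to $\chi_1({\ene}_1)\cap[[{\ga}_1,{\ga}_1]]$, whereas the paper carries it on $\beta'=\beta_{\mid [[{\ga}_1,{\ga}_1]]}$ via the four-term sequence (\ref{four-term}); your preliminary observation that ${\Ker}(\beta)\subseteq\chi_1({\ene}_1)$ is exactly what makes the two formulations equivalent.
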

\begin{proof}
From the natural five-term exact sequence (\ref{five-term}) it easily follows that ${\im}(\theta(G_i))={\ene}_i \cap [[{\ga}_i, {\ga}_i]], i = 1, 2$. Thus $\theta(G_i)$ induces a homomorphism $\theta'(G_i) : {\sf H_1^{\AWB}}({\qu}_i) \rightarrow [[{\ga}_i, {\ga}_i]]$ such that the sequence
\begin{equation} \label{four-term}
0 \longrightarrow {\Ker}(\theta(G_i)) \longrightarrow {\sf H_1^{\AWB}}({\qu}_i) \stackrel{\theta'(G_i)} \longrightarrow [[{\ga}_i, {\ga}_i]] \stackrel{\pi'_i} \longrightarrow [[{\qu}_i, {\qu}_i]] \longrightarrow 0,
 \end{equation}
is exact, where $\pi'_i$ is induced by $\pi_i \colon {\ga}_i \twoheadrightarrow {\qu}_i, i = 1, 2$. The naturality of $\theta(G_i)$ with respect to homomorphisms of central extensions (see \cite[Theorem 5.9]{EVdL}) yields the naturality of the above sequence which induces the following commutative diagram:
\begin{equation*}
\xymatrix{
{\Ker}(\theta(G_1)) \ar[d]^{{\sf H_1^{\AWB}}(\gamma)\mid} \ar@{>->}[r] \ar[d]  & {\sf H_1^{\AWB}}(\qu_1) \ar[rrr]^{\theta'(G_1)} {\ar@{->>}^{\theta(G_1)} (35,5); (53,20)*+{\ene_1 \cap [[\ga_1, \ga_1]]}}  {\ar@{>->} (57,16); (70,5)} \ar[d]^{{\sf H_1^{\AWB}}(\gamma)} &&  & \ar@{->>}[r]^{\pi_1'} [[\ga_1, \ga_1]] \ar[d]^{\beta'} & [[\qu_1, \qu_1]] \ar[d]^{\gamma'}\\
{\Ker}(\theta(G_2)) \ar@{>->}[r] & {\sf H_1^{\AWB}}(\qu_2) \ar[rrr]^{\theta'(G_2)} {\ar@{->>}^{\theta(G_2)} (35,-19); (53,-34)*+{\ene_2 \cap [[\ga_2, \ga_2]]}} {\ar@{>->} (57,-30); (70,-20)} &  & &[[\ga_2, \ga_2]] \ar@{->>}[r]^{\pi_2'} & [[\qu_2, \qu_2]]
}
\end{equation*}
where  $\beta'=\beta\mid_{[[{\ga}_1, {\ga}_1]]}$, $\gamma'=\gamma\mid_{[[{\qu}_1, {\qu}_1]]}$. Since $\gamma$ is an isomorphism, then $\gamma'$ and ${\sf H_1^{\AWB}}(\gamma)$ are also isomorphisms, the restriction of ${\sf H_1^{\AWB}}(\gamma)$ to ${\Ker}(\theta(G_1))$ is a monomorphism and $\beta'$ is an epimorphism.

By the commutativity of the left hand square in the diagram, we have that ${\sf H_1^{\AWB}}(\gamma)({\Ker}(\theta(G_1)))\subseteq {\Ker}(\theta(G_2))$. Conversely, for any $y\in {\Ker}(\theta(G_2))$, there exists $x\in{\sf H_1^{\AWB}}(\qu_1)$ such that ${\sf H_1^{\AWB}}(\gamma)(x)=y$. Now $0=\theta'(G_2)(y)=\theta'(G_2)\circ{\sf H_1^{\AWB}}(\gamma)(x)=\beta'\circ\theta'(G_1)(x)$. Hence $x\in {\Ker}(\theta(G_1))$ whenever $\beta'$ is a monomorphism.

Consequently, statement {\it b)} holds if and only if $\beta'$ is a monomorphism if and only if $(\alpha, \beta, \gamma)$ is an isoclinism (having in mind Proposition \ref{equivalence}).
\end{proof}

\begin{lemma} \label{B}
Let $\eta : \qu_1 \to \qu_2$ be an isomorphism of \textup{AWB} and $\ga'$ be as in the proof of Proposition \ref{quotient}, then $${\Ker}(\theta(G')) = {\sf H_1^{\AWB}}(\eta)^{-1}({\Ker}(\theta(G_2))) \cap {\Ker}(\theta(G_1)).$$
\end{lemma}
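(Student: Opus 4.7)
The plan is to exploit the naturality of $\theta$ from the five-term sequence (\ref{five-term}) applied to the two canonical homomorphisms of central extensions $(\sigma_i, \tau_i, \gamma_i) : (G') \to (G_i)$ that were constructed in the proof of Proposition \ref{quotient}. Recall that $(G') : 0 \to \ene_1 \times \ene_2 \to \ga' \to \qu_1 \to 0$, while $\gamma_1 = \id_{\qu_1}$, $\gamma_2 = \eta$, and $\sigma_i : \ene_1 \times \ene_2 \twoheadrightarrow \ene_i$ is the $i$-th coordinate projection.

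Since all three extensions are central, the left-most term $\ene/[[\ene,\ga]]$ of (\ref{five-term}) collapses to $\ene$ itself. Invoking naturality of $\theta$ (\cite[Theorem 5.9]{EVdL}) with respect to the two morphisms of central extensions above, I obtain two commutative squares, which together give the identities
\begin{equation*}
\theta(G_1) \;=\; \sigma_1 \circ \theta(G'), \qquad \theta(G_2) \circ {\sf H_1^{\AWB}}(\eta) \;=\; \sigma_2 \circ \theta(G').
\end{equation*}
For any $x \in {\sf H_1^{\AWB}}(\qu_1)$, the element $\theta(G')(x) \in \ene_1 \times \ene_2$ vanishes if and only if both of its components vanish, i.e.\ iff $\sigma_1\theta(G')(x) = 0$ and $\sigma_2\theta(G')(x) = 0$. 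By the displayed identities, these two conditions translate respectively to $x \in \Ker \theta(G_1)$ and $x \in {\sf H_1^{\AWB}}(\eta)^{-1}(\Ker \theta(G_2))$, which is exactly the required equality.

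The only mild obstacle is clerical rather than conceptual: one has to verify carefully that, after the centrality-induced simplification of the left end of (\ref{five-term}), the map induced between the kernel terms by $(\sigma_i, \tau_i, \gamma_i)$ is indeed $\sigma_i$. Once this is checked, the equivalence ``a product element is zero iff each projection is zero'' finishes the argument immediately.
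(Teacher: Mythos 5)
Your proof is correct and is essentially the paper's own argument: the paper likewise invokes naturality of $\theta$ on the two morphisms $(\sigma_i,\tau_i,\gamma_i):(G')\to(G_i)$ to write $\theta(G')$ as the map into $\ene_1\times\ene_2$ whose components are $\theta(G_1)$ and $\theta(G_2)\circ{\sf H_1^{\AWB}}(\eta)$, and then reads off the kernel as the intersection you describe. Your version is in fact written more carefully than the paper's (which compresses this into a single, slightly abusive, displayed identity), but there is no difference in substance.
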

\begin{proof}
From diagram (\ref{widetilde}) and the naturality of $\theta(G)$ we have that
\[\theta(G')=\theta(G_1)\times \theta(G_2)\circ {{\sf H_1^{\AWB}}(\gamma_2)}.\]
Since ${\sf H_1^{\AWB}}(\gamma_2)$ is an isomorphism, then ${\Ker}(\theta(G_2))\circ {\sf H_1^{\AWB}}(\gamma_2)={\sf H_1^{\AWB}}(\gamma_2)^{-1}({\Ker}(\theta(G_2)))$, hence the required equality follows.
\end{proof}

By using the Schur multiplier, the following result presents equivalent conditions for central extensions of \textup{AWB}'s to be isoclinic.

\begin{theorem} \label{main1}
Let $\eta : \qu_1 \to \qu_2$ be an isomorphism of \textup{AWB}. Then the following statements are equivalent:
\begin{enumerate}
\item[a)] $\eta$ induces an isoclinism from $(G_1)$ to $(G_2)$.
\item[b)] There exists an isomorphism $\beta' : [[\ga_1, \ga_1]] \to [[\ga_2, \ga_2]]$ with $\beta' \circ \theta'(G_1) = \theta'(G_2) \circ{\sf H_1^{\AWB}}(\eta)$.
\item[c)] ${\sf H_1^{\AWB}}(\eta)({\Ker}(\theta(G_1))) = {\Ker}(\theta(G_2))$.
\end{enumerate}
\end{theorem}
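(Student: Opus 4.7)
The plan is to close the cycle (a) $\Rightarrow$ (b) $\Rightarrow$ (c) $\Rightarrow$ (a), leveraging Lemmas \ref{A} and \ref{B} together with the pullback construction $\ga'$ from the proof of Proposition \ref{quotient}.

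For (a) $\Rightarrow$ (b), I would assume $(\eta, \xi) : (G_1) \sim (G_2)$ is an isoclinism and set $\beta' := \xi$. A class in ${\sf H_1^{\AWB}}(\qu_i)$ is represented by a cycle of the form $\sum a \otimes b + \sum c \circ d$ in the chain complex of Subsection 2.2, and tracing the connecting homomorphism of the five-term sequence (\ref{five-term}) through a set-theoretic section of $\pi_i$ shows that $\theta'(G_i)$ carries such a class to $\sum P_i(a,b) + \sum C_i(c,d) \in [[\ga_i,\ga_i]]$. Naturality makes ${\sf H_1^{\AWB}}(\eta)$ act componentwise by $\eta$ on representatives, while the commutativity of diagram (\ref{square isoclinic}) provides $\xi \circ P_1 = P_2 \circ (\eta \times \eta)$ and $\xi \circ C_1 = C_2 \circ (\eta \times \eta)$. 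Combining these term by term gives $\beta' \circ \theta'(G_1) = \theta'(G_2) \circ {\sf H_1^{\AWB}}(\eta)$.

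For (b) $\Rightarrow$ (c), a kernel chase in the square of (b) suffices. Since $\beta'$ and ${\sf H_1^{\AWB}}(\eta)$ are isomorphisms, $\beta'(\theta'(G_1)(x))=0$ iff $x \in \Ker(\theta'(G_1))$, and $\theta'(G_2)({\sf H_1^{\AWB}}(\eta)(x))=0$ iff $x \in {\sf H_1^{\AWB}}(\eta)^{-1}(\Ker(\theta'(G_2)))$. Commutativity of (b) forces these two subsets of ${\sf H_1^{\AWB}}(\qu_1)$ to coincide, hence ${\sf H_1^{\AWB}}(\eta)(\Ker(\theta'(G_1))) = \Ker(\theta'(G_2))$. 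Since $\theta(G_i)$ and $\theta'(G_i)$ share the same epimorphic factor onto $\ene_i \cap [[\ga_i,\ga_i]]$ (visible in the diagram inside the proof of Lemma \ref{A}), $\Ker(\theta') = \Ker(\theta)$, and (c) follows.

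For (c) $\Rightarrow$ (a), I would form the pullback $\ga' = \{(g_1, g_2) \in \ga_1 \times \ga_2 \mid \eta(\pi_1(g_1)) = \pi_2(g_2)\}$ exactly as in the proof of Proposition \ref{quotient}; this construction requires only the isomorphism $\eta$, not any prior isoclinism. It sits in homomorphisms of central extensions $(\sigma_i, \tau_i, \gamma_i) : (G') \to (G_i)$ with $\gamma_1 = \id_{\qu_1}$ and $\gamma_2 = \eta$. Lemma \ref{B} gives $\Ker(\theta(G')) = {\sf H_1^{\AWB}}(\eta)^{-1}(\Ker(\theta(G_2))) \cap \Ker(\theta(G_1))$, which under hypothesis (c) reduces to $\Ker(\theta(G_1))$. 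Consequently ${\sf H_1^{\AWB}}(\gamma_i)(\Ker(\theta(G'))) = \Ker(\theta(G_i))$ for $i=1,2$, so Lemma \ref{A} certifies both $(\sigma_i, \tau_i, \gamma_i)$ as isoclinic homomorphisms. Transitivity of isoclinism (Proposition \ref{equivalence relation}) yields $(G_1) \sim (G_2)$, with the isomorphism on quotients being $\gamma_2 \circ \gamma_1^{-1} = \eta$, as desired.

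The main obstacle I expect is (a) $\Rightarrow$ (b): although conceptually clear, it requires pinning down a concrete elementwise formula for $\theta'(G_i)$ coming from the connecting morphism of the five-term sequence before the naturality of ${\sf H_1^{\AWB}}$ and the isoclinism equalities can be invoked. The remaining two implications then reduce to straightforward diagram chases built from the two preceding lemmas and the pullback $\ga'$.
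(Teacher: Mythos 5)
Your cycle (a) $\Rightarrow$ (b) $\Rightarrow$ (c) $\Rightarrow$ (a) is the same as the paper's, and your (b) $\Rightarrow$ (c) and (c) $\Rightarrow$ (a) are essentially the arguments given there: the identical kernel chase (using $\Ker(\theta')=\Ker(\theta)$), and the identical use of the pullback $(G')$ together with Lemmas \ref{A} and \ref{B}; your remark that the construction of $\ga'$ and the centrality of $(G')$ need only the isomorphism $\eta$, not a prior isoclinism, is precisely the point that makes that direction legitimate. The one place you genuinely diverge is (a) $\Rightarrow$ (b). The paper never touches chain-level representatives: it routes this implication through the same pullback, using Proposition \ref{equivalence} and the note in the proof of Proposition \ref{quotient} to write $\beta'=\tau_2'\circ\tau_1'^{-1}$ with $\tau_i'=\tau_i|_{[[\ga',\ga']]}$ isomorphisms, and then reads off
$\theta'(G_2)\circ{\sf H_1^{\AWB}}(\gamma_2)=\tau_2'\circ\theta'(G')=\tau_2'\circ\tau_1'^{-1}\circ\theta'(G_1)\circ{\sf H_1^{\AWB}}(\gamma_1)$
from the naturality of the sequence (\ref{four-term}) applied to diagram (\ref{widetilde}); this uses only the naturality of $\theta$, which is exactly what \cite[Theorem 5.9]{EVdL} supplies. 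Your version instead rests on the explicit formula $\theta'(G_i)\bigl[\sum a\otimes b+\sum c\circ d\bigr]=\sum P_i(a,b)+\sum C_i(c,d)$ for the connecting map on cycle representatives. That formula is the standard transgression, it is true, and it is consistent with the paper's claim that $\im(\theta(G_i))=\ene_i\cap[[\ga_i,\ga_i]]$; but it is established neither in the paper nor in the cited source for the five-term sequence, so as written this step is an assertion rather than a proof --- you flag it yourself as the main obstacle. To keep your route you would need to derive that formula from the identification of ${\sf H_1^{\AWB}}$ with the Hopf formula (\ref{Hopf}); otherwise the paper's pullback-plus-naturality argument closes this gap using only tools already available, at the cost of being less explicit about what $\beta'$ does to elements.
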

\begin{proof}
{\it a)} $\Rightarrow$ {\it b)} Let $(\eta, \beta') : (G_1) \sim (G_2)$ be an isoclinism. Then Proposition \ref{equivalence} together with the note given in the proof of Proposition \ref{quotient} ({\it a)} $\Leftrightarrow$ {\it b)}) imply that $\beta'=\tau_2'\circ\tau_1'^{-1}$, where $\tau_i'={\tau_i}_{\mid [[\ga',\ga']]}$, and the adaptation of the  proof of \cite[Proposition 3.14]{BC} to the framework of \textup{AWB}'s shows that both of them are isomorphisms.
Then the naturality of sequence (\ref{four-term}) applied to diagram (\ref{widetilde}) implies
\[\theta'(G_2) \circ{\sf H_1^{\AWB}}(\gamma_2)=\tau_2'\circ\theta'(G')=\tau_2'\circ\tau_1'^{-1}\circ\theta'(G_1)\circ{\sf H_1^{\AWB}}(\gamma_1),\]
hence the required equality.

{\it b)} $\Rightarrow$ {\it c)} Since $\beta' \circ \theta'(G_1) = \theta'(G_2) \circ{\sf H_1^{\AWB}}(\eta)$, then ${\sf H_1^{\AWB}}(\eta)({\Ker}(\theta(G_1))) \subseteq {\Ker}(\theta(G_2))$. The converse inclusion is followed thanks to be $\beta'$ an isomorphism.

{\it c)} $\Rightarrow$ {\it a)} Let $(G')$ be as above, then Lemma \ref{B} implies that
\[{\Ker}(\theta(G'))={\sf H_1^{\AWB}}(\eta)^{-1}\circ{\sf H_1^{\AWB}}(\eta)({\Ker}(\theta(G_1)))\cap {\Ker}(\theta(G_1))={\Ker}(\theta(G_1)).\]
From diagram (\ref{widetilde}), the epimorphisms $(\sigma_i,\tau_i,\gamma_i) : (G') \sim (G_i)$ and the above equality imply that
\[{\sf H_1^{\AWB}}(\gamma_i)({\Ker}(\theta(G')))={\Ker}(\theta(G_i)), i=1,2.\]
Lemma \ref{A} implies that $(\sigma_i,\tau_i,\gamma_i) : (G') \sim (G_i)$, $i=1,2$. Now the note in the proof of Proposition \ref{quotient} ({\it a)} $\Leftrightarrow$ {\it b)})
ends the proof.
\end{proof}

\begin{proposition} \label{4.4}
For any  \textup{AWB} {\qu}, there exists a homomorphism $\theta_{\qu} \colon {\sf H_1^{\AWB}}({\qu}/{\sf Z}({\qu}))$ $\to [[{\qu},{\qu}]]$ such that the following sequence is exact and natural:
\begin{equation} \label{four-term 1}
0 \to \frac{{\re} \cap [[{\fe},{\fe}]]}{[[{\se},{\fe}]]} \to {\sf H_1^{\AWB}}({\qu}/{\sf Z}({\qu})) \overset{\theta_{\qu}}\to [[{\qu},{\qu}]] \overset{\pi'}\to [[{\qu}/{\sf Z}({\qu}),{\qu}/{\sf Z}({\qu})]] \to 0
\end{equation}
where $\pi'$ denotes the restriction of the canonical projection $\pi \colon  {\qu} \to {\qu}/{\sf Z}({\qu})$ and $0 \to {\re} \to {\fe} \overset{\rho} \to {\qu} \to 0$ is a free presentation of {\qu}.
\end{proposition}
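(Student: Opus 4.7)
The strategy is to apply the five-term exact sequence (\ref{five-term}) to the canonical central extension $(e_{\qu}): 0 \to \ze(\qu) \to \qu \to \qu/\ze(\qu) \to 0$ from (\ref{Lie central extension}), then identify the resulting kernel in terms of the given free presentation, in the same spirit as Lemma \ref{A}.

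First I would observe that, since $[[\ze(\qu), \qu]] = 0$ by definition of the center, the five-term sequence (\ref{five-term}) for $(e_{\qu})$ simplifies to
\[{\sf H_1^{\AWB}}(\qu) \to {\sf H_1^{\AWB}}(\qu/\ze(\qu)) \stackrel{\theta(e_{\qu})}\to \ze(\qu) \to {\sf H_0^{\AWB}}(\qu) \to {\sf H_0^{\AWB}}(\qu/\ze(\qu)) \to 0,\]
with $\im(\theta(e_{\qu})) = \ze(\qu) \cap [[\qu,\qu]]$. Composing $\theta(e_{\qu})$ with the inclusion $\ze(\qu) \cap [[\qu,\qu]] \hookrightarrow [[\qu,\qu]]$ defines the required homomorphism $\theta_{\qu}$. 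Its cokernel is $[[\qu,\qu]]/(\ze(\qu) \cap [[\qu,\qu]])$, which is canonically isomorphic to $[[\qu/\ze(\qu), \qu/\ze(\qu)]]$ via the map $\pi'$ induced by the projection $\qu \to \qu/\ze(\qu)$; this gives exactness of (\ref{four-term 1}) at the two right-hand terms.

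It remains to identify $\Ker(\theta_{\qu}) = \Ker(\theta(e_{\qu}))$, which by exactness of the five-term sequence coincides with the image of the comparison map ${\sf H_1^{\AWB}}(\qu) \to {\sf H_1^{\AWB}}(\qu/\ze(\qu))$. Setting $\se := \rho^{-1}(\ze(\qu))$, so that $\re \subseteq \se$ and $0 \to \se \to \fe \to \qu/\ze(\qu) \to 0$ is a free presentation of $\qu/\ze(\qu)$, the Hopf-type formula (\ref{Hopf}) yields
\[{\sf H_1^{\AWB}}(\qu) \cong \frac{\re \cap [[\fe,\fe]]}{[[\re,\fe]]}, \qquad {\sf H_1^{\AWB}}(\qu/\ze(\qu)) \cong \frac{\se \cap [[\fe,\fe]]}{[[\se,\fe]]},\]
and the comparison map is the one induced by the inclusion $\re \subseteq \se$.

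The key point, which I expect to be the main obstacle, is the inclusion $[[\se,\fe]] \subseteq \re$. This is forced by applying $\rho$: since $\rho(\se) = \ze(\qu)$, we get $\rho([[\se,\fe]]) \subseteq [[\ze(\qu), \qu]] = 0$, so $[[\se,\fe]] \subseteq \re \cap [[\fe,\fe]]$, and in particular the subquotient $(\re \cap [[\fe,\fe]])/[[\se,\fe]]$ is well defined. Because $[[\re,\fe]] \subseteq [[\se,\fe]]$, the comparison map factors as $(\re \cap [[\fe,\fe]])/[[\re,\fe]] \twoheadrightarrow (\re \cap [[\fe,\fe]])/[[\se,\fe]] \hookrightarrow (\se \cap [[\fe,\fe]])/[[\se,\fe]]$, so its image is exactly $(\re \cap [[\fe,\fe]])/[[\se,\fe]]$, which is the left-most term of (\ref{four-term 1}). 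Naturality of the whole sequence follows from the naturality of the five-term sequence (\ref{five-term}) with respect to morphisms of central extensions, applied to compatible free presentations.
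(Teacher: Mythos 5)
Your proof is correct, and it takes a genuinely different route from the paper's. The paper never invokes the five-term sequence here: it works entirely on the Hopf-formula model, choosing $\se$ with $\ze(\qu)\cong\se/\re$, observing $[[\se,\fe]]\subseteq\re$, and \emph{defining} $\theta_{\qu}$ explicitly on ${\sf H_1^{\AWB}}(\qu/\ze(\qu))\cong(\se\cap[[\fe,\fe]])/[[\se,\fe]]$ by $x+[[\se,\fe]]\mapsto x+\re$; the kernel $(\re\cap[[\fe,\fe]])/[[\se,\fe]]$ and image $[[\qu,\qu]]\cap\ze(\qu)$ are then read off directly from this formula, and exactness follows. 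You instead obtain $\theta_{\qu}$ abstractly as $\theta(e_{\qu})$ composed with the inclusion $\ze(\qu)\cap[[\qu,\qu]]\hookrightarrow[[\qu,\qu]]$, get exactness at the two right-hand terms from the five-term sequence (\ref{five-term}) together with $\im(\theta(e_{\qu}))=\ze(\qu)\cap[[\qu,\qu]]$, and only descend to the free presentation to compute the kernel. Your route buys naturality for free (it is inherited from the naturality of (\ref{five-term})) and reuses established machinery; the paper's explicit formula makes $\theta_{\qu}$ concrete and avoids the one step you state without proof, namely that under the Hopf isomorphism (\ref{Hopf}) the comparison map ${\sf H_1^{\AWB}}(\qu)\to{\sf H_1^{\AWB}}(\qu/\ze(\qu))$ is the map induced by the inclusion $\re\subseteq\se$ (covered by $\id_{\fe}$). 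That naturality of the Hopf formula is standard and consistent with the paper's level of detail, but it is the only point where your argument leans on an unverified identification; since the proposition only asserts the existence of some $\theta_{\qu}$ making the sequence exact, the possible discrepancy between your $\theta_{\qu}$ and the paper's explicit one is immaterial.
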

\begin{proof}
For a free presentation ${\fe}/{\re} \cong {\qu}$, suppose ${\sf Z}({\qu})\cong {\se}/{\re}$ for a suitable ideal $\se$ of $\fe$. Consider the following diagram of free presentations:
\[
\xymatrix{& & 0 \ar[d] & 0 \ar[ld]\\
&  & {\re} \ar[d]\ar[ld] \\
0 \ar[r] & {\se} \ \ar[r] \ar[d] & {\fe} \ar[d]^{\rho} \ar[rd]^{\pi \circ \rho} \\
0 \ar[r]& {\sf Z}({\qu}) \ar[r] \ar[d]& {\qu} \ar[r]^{\pi} \ar[d]& {\qu}/{\sf Z}({\qu}) \ar[r] \ar[dr] & 0 \\
 & 0 & 0 &  & 0
}
\]
Since $[[{\se},{\fe}]] \subseteq {\re}$, then $\theta_{\qu} \colon {\sf H_1^{\AWB}}({\qu}/{\sf Z}({\qu})) \cong \frac{{\se} \cap [[{\fe},{\fe}]]}{[[{\se},{\fe}]]} \to [[{\qu},{\qu}]], \theta_{\qu}(x+[[{\se},{\fe}]]) = x +{\re}$, is a well-defined homomorphism such that $\im(\theta_{\qu}) = [[{\qu},{\qu}]] \cap {\sf Z}({\qu})$ and $\Ker(\theta_{\qu}) = \frac{{\re} \cap [[{\fe},{\fe}]]}{[[{\se},{\fe}]]}$.
\end{proof}


\section{Stem extensions of \textup{AWB}'s} \label{Stem extensions}

This section is dedicated to analyzing the interconnections between isoclinism, stem extensions and stem covers.

\begin{definition}
A central extension $(G) : 0 \to {\ene} \stackrel{\chi} \to {\ga} \stackrel{\pi} \to {\qu} \to 0$ is said to be:
\begin{enumerate}
\item[a)] a stem extension (of $\qu$) if ${\ga}/[[{\ga},{\ga}]]\cong {\qu}/[[{\qu},{\qu}]]$.
\item[b)] a stem cover (of $\qu$) if ${\ga}/[[{\ga},{\ga}]]\cong {\qu}/[[{\qu},{\qu}]]$ and the induced map ${\sf H}_1^{\AWB}({\ga})\rightarrow {\sf H}_1^{\AWB}({\qu})$ is the zero map.
\end{enumerate}
\end{definition}
In case {\it b)}, $\ga$ is said to be a cover (or covering \textup{AWB}) of $\qu$.
\begin{proposition} \label{stem}
For a central extension $(G) \colon 0 \to \ene \stackrel{\chi} \to \ga \stackrel{\pi} \to \qu \to 0$, the following statements are equivalent:
\begin{enumerate}
\item[a)] $(G)$ is a stem extension.
\item[b)] The induced map ${\ene} \rightarrow {\sf H}_0^{\AWB}({\qu})$ is the zero map.
\item[c)] $\theta(G) \colon {\sf H}_1^{\AWB}({\qu}) \rightarrow {\ene}$ is an epimorphism.
\item[d)] ${\ene} \subseteq [[{\ga},{\ga}]]$.
\end{enumerate}
\end{proposition}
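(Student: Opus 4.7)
The plan is to derive all four equivalences from a single source: the natural five-term exact sequence (\ref{five-term}) associated to $(G)$. Because the extension is central, $[[\ene,\ga]]=0$, so the sequence specializes to
\[
{\sf H_1^{\AWB}}(\ga) \longrightarrow {\sf H_1^{\AWB}}(\qu) \stackrel{\theta(G)}{\longrightarrow} \ene \stackrel{\bar\chi}{\longrightarrow} {\sf H_0^{\AWB}}(\ga) \stackrel{\bar\pi}{\longrightarrow} {\sf H_0^{\AWB}}(\qu) \longrightarrow 0,
\]
where $\bar\chi(n)=\chi(n)+[[\ga,\ga]]$ and $\bar\pi$ is induced by $\pi$. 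Each of (a)--(d) then translates into a statement about one of these maps being zero, surjective, or an isomorphism, and the equivalences fall out of exactness.

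For (a) $\Leftrightarrow$ (d): exactness at ${\sf H_0^{\AWB}}(\qu)$ makes $\bar\pi$ automatically surjective, so (a) reduces to $\bar\pi$ being injective, i.e.\ $\ker(\bar\pi)=0$. By exactness, $\ker(\bar\pi)=\im(\bar\chi)=(\chi(\ene)+[[\ga,\ga]])/[[\ga,\ga]]$, which vanishes exactly when $\chi(\ene)\subseteq[[\ga,\ga]]$. Identifying $\ene$ with $\chi(\ene)\subseteq\ga$ via the monomorphism $\chi$, this is (d). The same computation handles (b) $\Leftrightarrow$ (d): the induced map in (b) is precisely $\bar\chi$ (composing further with $\bar\pi$ would be zero for free by exactness), and $\bar\chi=0$ again amounts to $\ene\subseteq[[\ga,\ga]]$.

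For (c) $\Leftrightarrow$ (d): exactness gives $\im(\theta(G))=\ker(\bar\chi)=\{n\in\ene\mid \chi(n)\in[[\ga,\ga]]\}$, which under the same identification equals $\ene\cap[[\ga,\ga]]$. Hence $\theta(G)$ is an epimorphism precisely when $\ene\cap[[\ga,\ga]]=\ene$, i.e.\ exactly when (d) holds.

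The main (and essentially only) obstacle is the initial unpacking: one has to verify that the connecting maps in the five-term sequence from \cite[Theorem 2.13]{Ca} do act by the concrete formulas $\bar\chi(n)=\chi(n)+[[\ga,\ga]]$ and $\bar\pi(g+[[\ga,\ga]])=\pi(g)+[[\qu,\qu]]$, so that the abstract conditions ``zero/surjective/isomorphism'' on those maps match the concrete statements (a)--(d). Once that identification is in place the remainder is pure diagram chasing in the collapsed four-term sequence, and one obtains the circle (a) $\Leftrightarrow$ (b) $\Leftrightarrow$ (c) $\Leftrightarrow$ (d) with essentially no extra calculation.
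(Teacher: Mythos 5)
Your proposal is correct and follows essentially the same route as the paper: every equivalence is read off the five-term exact sequence (\ref{five-term}) after specializing with $[[\ene,\ga]]=0$, exactly as the paper does for $a)$, $b)$ and $c)$. The only cosmetic difference is that the paper settles $a)\Leftrightarrow d)$ by a separate direct computation with the isomorphisms $\ga/[[\ga,\ga]]\cong\qu/[[\qu,\qu]]\cong\ga/([[\ga,\ga]]+\ene)$ rather than via the injectivity of the induced map ${\sf H}_0^{\AWB}(\ga)\to{\sf H}_0^{\AWB}(\qu)$ inside the sequence, but both arguments rest on the same identification and yield the same conclusion.
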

\begin{proof}
The equivalences between {\it a), b)} and {\it c)} follow from the exact sequence (\ref{five-term}). The equivalence between {\it a)} and {\it d)} is a consequence of the isomorphisms ${\ga}/[[\ga,\ga]]\cong {\qu}/[[{\qu},{\qu}]]\cong {\ga}/([[{\ga},{\ga}]]+{\ene})$ which implies that $[[{\ga},{\ga}]]+{\ene}=[[{\ga},{\ga}]]$.
\end{proof}

\begin{corollary}
Let $(G_i) \colon 0 \to {\ene_i} \stackrel{\chi_i} \to {\ga_i} \stackrel{\pi_i} \to {\qu_i} \to 0$, $i=1,2$, be two isoclinic stem extensions,  then $\ene_1\cong \ene_2$. In particular, if the \textup{AWB}'s ${\ga_i}$, $i = 1, 2$, are finite-dimensional, then $\textup{dim}\,\ga_1 = \textup{dim}\,\ga_2$.
\end{corollary}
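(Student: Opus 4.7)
The plan is to use the characterization of stem extensions in Proposition \ref{stem} together with the equivalent condition for isoclinism in Theorem \ref{main1}. Since $(G_1)$ and $(G_2)$ are stem extensions, Proposition \ref{stem} ({\it c)}) tells us that the connecting homomorphisms $\theta(G_i) \colon {\sf H_1^{\AWB}}(\qu_i) \to \ene_i$ from the five-term exact sequence (\ref{five-term}) are epimorphisms. Consequently, for $i=1,2$ we have induced isomorphisms
\[
\overline{\theta}(G_i) \colon {\sf H_1^{\AWB}}(\qu_i) / \Ker(\theta(G_i)) \stackrel{\cong}\longrightarrow \ene_i.
\]

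Next, from the isoclinism $(G_1)\sim (G_2)$ we obtain an isomorphism $\eta \colon \qu_1 \to \qu_2$. By Theorem \ref{main1} ({\it a)} $\Leftrightarrow$ {\it c)}), the induced map ${\sf H_1^{\AWB}}(\eta)\colon {\sf H_1^{\AWB}}(\qu_1) \to {\sf H_1^{\AWB}}(\qu_2)$ is an isomorphism which sends $\Ker(\theta(G_1))$ onto $\Ker(\theta(G_2))$. Therefore ${\sf H_1^{\AWB}}(\eta)$ descends to an isomorphism between the quotients ${\sf H_1^{\AWB}}(\qu_i) / \Ker(\theta(G_i))$, and composing with $\overline{\theta}(G_1)^{-1}$ and $\overline{\theta}(G_2)$ yields the desired isomorphism $\ene_1 \cong \ene_2$.

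For the dimension statement, since $(G_1)$ and $(G_2)$ are exact sequences of finite-dimensional \textup{AWB}'s we have $\dim \ga_i = \dim \ene_i + \dim \qu_i$. The isomorphism $\eta \colon \qu_1 \to \qu_2$ gives $\dim \qu_1 = \dim \qu_2$, and together with $\ene_1 \cong \ene_2$ this produces $\dim \ga_1 = \dim \ga_2$.

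There is no real obstacle here; the argument is essentially a direct application of Proposition \ref{stem} and Theorem \ref{main1}. The only point to verify carefully is the identification of $\ene_i$ with the image of $\theta(G_i)$, which relies on the stem hypothesis (so that $\theta(G_i)$ is surjective onto $\ene_i$ rather than onto the smaller subspace $\ene_i \cap [[\ga_i,\ga_i]]$); but this is precisely guaranteed by the equivalence {\it c)} $\Leftrightarrow$ {\it d)} in Proposition \ref{stem}.
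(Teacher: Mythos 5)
Your argument is correct, but it takes a genuinely different route from the paper's. The paper disposes of this corollary in one line by invoking Proposition \ref{3.6}~\emph{b)}: the isoclinism $(\eta,\xi)$ satisfies $\xi\bigl(\chi_1(\ene_1)\cap[[\ga_1,\ga_1]]\bigr)=\chi_2(\ene_2)\cap[[\ga_2,\ga_2]]$, and since both extensions are stem, Proposition \ref{stem}~\emph{d)} gives $\chi_i(\ene_i)\subseteq[[\ga_i,\ga_i]]$, so $\xi$ itself restricts to an isomorphism $\chi_1(\ene_1)\cong\chi_2(\ene_2)$, i.e.\ $\ene_1\cong\ene_2$; the dimension count is then the same as yours. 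You instead go through the homological machinery: you use Proposition \ref{stem}~\emph{c)} to see that $\theta(G_i)$ is surjective onto $\ene_i$, factor it through ${\sf H_1^{\AWB}}(\qu_i)/\Ker(\theta(G_i))\cong\ene_i$, and use Theorem \ref{main1}~(\emph{a)}\,$\Leftrightarrow$\,\emph{c)}) to match up the kernels under ${\sf H_1^{\AWB}}(\eta)$. This is sound --- Theorem \ref{main1} and Proposition \ref{stem} both precede the corollary, so there is no circularity --- and it has the side benefit of exhibiting $\ene_i$ explicitly as a quotient of the Schur multiplier of $\qu_i$, which is conceptually close to the stem-cover material later in the section. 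The trade-off is that it is heavier than necessary: the paper's route uses only the defining data of the isoclinism and needs no appeal to the five-term sequence or to Theorem \ref{main1}. Your closing remark about why surjectivity onto all of $\ene_i$ (rather than onto $\ene_i\cap[[\ga_i,\ga_i]]$) requires the stem hypothesis is exactly the right point to flag.
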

\begin{proof}
Straightforward consequence of Proposition \ref{3.6} {\it b)}.
\end{proof}

\begin{lemma} \label{5.3}
A central extension $(G) \colon 0 \to {\ene} \stackrel{\chi} \to {\ga} \stackrel{\pi} \to {\qu} \to 0$ is stem if and only if ${\me} \cap [[{\ga},{\ga}]] \neq 0$, for every non-zero two-sided ideal {\me} of {\ene}.
\end{lemma}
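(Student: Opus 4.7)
The plan is to reduce the lemma to the characterization given by Proposition \ref{stem} {\it d)}, namely that $(G)$ is stem if and only if ${\ene} \subseteq [[{\ga},{\ga}]]$, and then argue by a direct contrapositive on the backward implication.

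For the forward implication, I would assume $(G)$ is stem, so ${\ene} \subseteq [[{\ga},{\ga}]]$. Then for any nonzero two-sided ideal ${\me}$ of ${\ene}$ we have ${\me} \subseteq {\ene} \subseteq [[{\ga},{\ga}]]$, so ${\me} \cap [[{\ga},{\ga}]] = {\me} \neq 0$. This is essentially a one-line argument.

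For the converse, I would prove the contrapositive: assume $(G)$ is not stem, and produce a nonzero two-sided ideal ${\me}$ of ${\ene}$ with ${\me} \cap [[{\ga},{\ga}]] = 0$. By Proposition \ref{stem} {\it d)}, the failure of the stem property gives ${\ene} \not\subseteq [[{\ga},{\ga}]]$, so ${\ene} \cap [[{\ga},{\ga}]] \subsetneq {\ene}$ as $\K$-subspaces. Pick any $\K$-linear complement ${\me}$ of ${\ene} \cap [[{\ga},{\ga}]]$ in ${\ene}$; then ${\me}$ is nonzero and ${\me} \cap [[{\ga},{\ga}]] = {\me} \cap ({\ene} \cap [[{\ga},{\ga}]]) = 0$.

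The key observation that makes ${\me}$ automatically an ideal (so no additional work is needed) is that ${\ene} \subseteq {\sf Z}({\ga})$ because $(G)$ is central, which forces both the product and the bracket on ${\ene}$ to vanish identically. Hence ${\ene}$ is an abelian \textup{AWB}, every $\K$-subspace of ${\ene}$ is trivially a two-sided ideal of ${\ene}$ (and in fact of ${\ga}$), and the complement ${\me}$ qualifies. This contradicts the assumption on nonzero ideals, finishing the proof. There is no real obstacle here beyond noting that centrality trivialises the ideal condition inside ${\ene}$, so the whole argument reduces to elementary linear algebra combined with Proposition \ref{stem} {\it d)}.
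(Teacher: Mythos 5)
Your proposal is correct and follows essentially the same route as the paper: both directions reduce to the characterization ${\ene} \subseteq [[{\ga},{\ga}]]$ of Proposition \ref{stem} \textit{d)}, and the converse hinges on the same key observation that centrality of ${\ene}$ makes any $\K$-subspace of ${\ene}$ a two-sided ideal, so a subspace meeting $[[{\ga},{\ga}]]$ trivially can be produced by pure linear algebra. The only cosmetic difference is that you take a full linear complement of ${\ene} \cap [[{\ga},{\ga}]]$ in ${\ene}$, whereas the paper takes the one-dimensional span of a single element $x \in {\ene} \setminus [[{\ga},{\ga}]]$; you are in fact slightly more careful than the paper in spelling out why that subspace is an ideal.
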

\begin{proof}
Assume $(G)$ is stem and {\me} is a two-sided ideal of {\ene} such that  ${\me} \cap [[{\ga},{\ga}]] = 0$. Then ${\me} = {\me} \cap {\ene} \subseteq {\me} \cap [[{\ga},{\ga}]] = 0$, consequently ${\me} = 0$.

Conversely, suppose on the contrary that ${\ene} \nsubseteq [[{\ga},{\ga}]]$. Let $ \langle x \rangle$ be a two-sided ideal of {\ga} spanned by $x \in {\ene} \backslash  [[{\ga},{\ga}]]$. If $y \in \langle x \rangle\cap [[{\ga}, {\ga}]]$, then $y = \lambda x \in {\ene}$ for some $\lambda \in \mathbb{K}$. On the other hand $y \in[[{\ga}, {\ga}]]$, but $x \notin [[{\ga}, {\ga}]]$, hence $\lambda x \notin [[{\ga}, {\ga}]]$, except $\lambda = 0$. Therefore $y = 0$ whence $\langle x \rangle \cap [[{\ga},{\ga}]] = 0$, which is a contradiction.
\end{proof}

In the following, we establish that each isoclinism class of central extensions contains at least one stem extension.

\begin{proposition} \label{stemexistence}
Any central extension is isoclinic to a stem extension.
\end{proposition}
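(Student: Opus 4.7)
The plan is to construct, from a given central extension $(G)\colon 0\to \ene \stackrel{\chi}\to \ga \stackrel{\pi}\to \qu \to 0$, a quotient extension that is stem and then verify that the canonical projection realises an isoclinism between the two. The obstruction to $(G)$ being stem is precisely the presence of elements of $\ene$ lying outside $[[\ga,\ga]]$ (by the characterisation in Proposition \ref{stem}\textit{d)}), so the natural move is to factor out a complement of $\ene\cap[[\ga,\ga]]$ inside $\ene$.

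First I would choose, using that $\ene$ is a $\K$-vector space, a linear complement $\ene'$ of $\ene\cap[[\ga,\ga]]$ in $\ene$, so that
\begin{equation*}
\ene = (\ene\cap[[\ga,\ga]]) \oplus \ene', \qquad \ene' \cap [[\ga,\ga]] = 0.
\end{equation*}
Since $(G)$ is central, $\ene\subseteq \ze(\ga)$, hence every $\K$-subspace of $\ene$ is automatically a two-sided ideal of $\ga$ (products and brackets with any element of $\ga$ are zero). In particular $\ene'$ is a two-sided ideal of $\ga$, and the induced sequence
\begin{equation*}
(G'') \colon 0 \to \ene/\ene' \to \ga/\ene' \stackrel{\overline{\pi}}\to \qu \to 0
\end{equation*}
is a well-defined central extension of \textup{AWB}'s (centrality is preserved because the image of $\ene$ stays in the centre of $\ga/\ene'$).

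Next I would check that $(G'')$ is a stem extension. The image of $\ene\cap[[\ga,\ga]]$ under the projection $\ga\to\ga/\ene'$ equals $((\ene\cap[[\ga,\ga]])+\ene')/\ene' = \ene/\ene'$, and it is contained in the projection of $[[\ga,\ga]]$, which lies in $[[\ga/\ene',\ga/\ene']]$. Hence $\ene/\ene' \subseteq [[\ga/\ene',\ga/\ene']]$, and Proposition \ref{stem}\textit{d)} gives that $(G'')$ is stem.

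Finally I would exhibit the isoclinism between $(G)$ and $(G'')$ by applying Proposition \ref{equivalence}\textit{a)} to the homomorphism of central extensions given by the canonical projections $\alpha\colon\ene\twoheadrightarrow\ene/\ene'$, $\beta\colon\ga\twoheadrightarrow\ga/\ene'$ together with $\gamma=\id_{\qu}$. Here $\gamma$ is trivially an isomorphism and, by the choice of $\ene'$,
\begin{equation*}
\Ker(\beta)\cap[[\ga,\ga]] = \ene' \cap [[\ga,\ga]] = 0,
\end{equation*}
so Proposition \ref{equivalence}\textit{a)} yields an isoclinism $(G)\sim(G'')$, concluding the proof. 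I do not anticipate a serious obstacle: the only subtlety worth highlighting is the observation that centrality of $\ene$ removes any ideal-theoretic constraint when selecting the complement, making an arbitrary linear complement sufficient.
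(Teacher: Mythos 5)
Your proof is correct, and it reaches the same quotient as the paper by a more direct route. The paper forms the poset $\mathcal{A}$ of two-sided ideals ${\ja}\subseteq{\ene}$ of ${\ga}$ with ${\ja}\cap[[{\ga},{\ga}]]=0$, extracts a maximal element ${\me}$ via Zorn's lemma, and then proves that $({G}/{M})$ is stem indirectly, by combining the maximality of ${\me}$ with the criterion of Lemma \ref{5.3} (every non-zero ideal inside the kernel must meet the commutator). You instead observe that centrality makes every $\K$-subspace of $\chi({\ene})$ a two-sided ideal of ${\ga}$, so a linear complement ${\ene}'$ of ${\ene}\cap[[{\ga},{\ga}]]$ in ${\ene}$ already is a maximal member of $\mathcal{A}$, and stemness of the quotient follows at once from Proposition \ref{stem}\,\textit{d)} because the image of ${\ene}$ in ${\ga}/{\ene}'$ coincides with the image of ${\ene}\cap[[{\ga},{\ga}]]$ and hence lands in $[[{\ga}/{\ene}',{\ga}/{\ene}']]$. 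Both arguments invoke choice (Zorn's lemma versus the existence of a linear complement), but yours bypasses Lemma \ref{5.3} entirely and, as a bonus, produces the decomposition ${\ene}=({\ene}\cap[[{\ga},{\ga}]])\oplus{\ene}'$ explicitly --- which is precisely the form in which the result is quoted later in the proof of Theorem \ref{4.3}, where the paper asserts $\ene=\me\oplus({\ene}\cap[[\ga,\ga]])$ without comment; with the Zorn construction that equality needs the extra remark that maximality plus centrality forces ${\me}$ to be a complement. The one thing the paper's formulation buys is that it does not lean on centrality to know the ideals exist, but since the proposition is stated only for central extensions this costs you nothing.
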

\begin{proof}
Suppose that $(G) \colon 0 \to {\ene} \stackrel{\chi} \to {\ga} \stackrel{\pi} \to {\qu} \to 0$ is a central extension. Let  $\mathcal{A}= \{ {\ja} \mid {\ja} ~\text{is a  two-sided ideal of} ~{\ga} ~\text{such that} ~{\ja} \subseteq {\ene}  ~\text{and} ~ {\ja} \cap [[\ga,\ga]] = 0\}$. The set  is non-empty because it contains the zero ideal. We define a partial ordering on $\mathcal{A}$ by inclusion and evidently, by Zorn's lemma, we can find a maximal two-sided ideal $\me$ in $\mathcal{A}$. Since $\me \cap [[\ga,\ga]] = 0$, it follows by Proposition \ref{equivalence} that $(nat,nat,id_{\qu}) : (G) \to (G/M)$ is isoclinic, where
$(G/M)$ denotes the central extension $0 \to {\ene/\me} \stackrel{\overline{\chi}} \to {\ga/\me} \stackrel{\overline{\pi}} \to {\qu} \to 0$ and $nat$ denotes the corresponding projection.

We claim that $(G/M)$ is a stem extension. Indeed, assume that ${\ja}/{\me} \cap [[{\ga}/{\me}, {\ga}/{\me}]] = {\me}$, for some two-sided ideal {\ja} of {\ga} such that ${\me} \subseteq {\ja} \subseteq {\ene}$. If $x \in {\ja} \cap [[{\ga},{\ga}]]$, then $x+{\me} \in {\ja}/{\me} \cap [[{\ga}/{\me}, {\ga}/{\me}]] = {\me}$, therefore $x \in {\me}$ and hence $x \in {\me} \cap [[{\ga},{\ga}]] = 0$. In consequence, ${\ja} \cap [[{\ga},{\ga}]] = 0$ and ${\ja} \in \mathcal{A}$. Now the maximality of {\me} implies that ${\ja} = {\me}$ and Lemma \ref{5.3} concludes the proof.
\end{proof}

\begin{corollary}
Let ${\cal C}$ be an isoclinism family of finite-dimensional \textup{AWB}'s and ${\ga} \in {\cal C}$. Then ${\ga}$ is a stem \textup{AWB} (i.e. $\ze({\ga}) \subseteq [[{\ga}, {\ga}]]$) if and only if $\textup{dim}\,{\ga} = \textup{min}\,\{ \textup{dim}\,{\he} \mid {\he} \in {\cal C} \}$.
\end{corollary}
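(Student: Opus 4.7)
My plan is to prove the equivalence via two short arguments that leverage the isoclinism machinery built above, in particular Proposition \ref{3.6} {\it b)} and Proposition \ref{extra} {\it b)}.

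For the direct implication, I fix an arbitrary ${\he} \in \mathcal{C}$ and choose an isoclinism $(\eta,\xi) : (e_G) \sim (e_H)$. By definition this supplies an isomorphism $\eta : {\ga}/\ze({\ga}) \to {\he}/\ze({\he})$, which reduces the comparison of $\dim {\ga}$ and $\dim {\he}$ to that of $\dim \ze({\ga})$ and $\dim \ze({\he})$ via the splitting $\dim {\ga} = \dim \ze({\ga}) + \dim({\ga}/\ze({\ga}))$. I then invoke Proposition \ref{3.6} {\it b)} applied to the central extensions $(e_G)$ and $(e_H)$, which identifies $\ze({\ga}) \cap [[{\ga},{\ga}]]$ with $\ze({\he}) \cap [[{\he},{\he}]]$ through $\xi$, so these two intersections are equidimensional. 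The stem hypothesis $\ze({\ga}) \subseteq [[{\ga},{\ga}]]$ collapses the left-hand intersection to $\ze({\ga})$ itself, yielding $\dim \ze({\ga}) \leq \dim \ze({\he})$ and hence $\dim {\ga} \leq \dim {\he}$.

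For the converse I argue by contrapositive: assume ${\ga}$ is not stem, so that there exists $x \in \ze({\ga}) \setminus [[{\ga},{\ga}]]$. The key observation is that $\K x$ is a one-dimensional two-sided ideal of ${\ga}$ (since $x$ being central annihilates ${\ga}$ on both sides under both the product and the bracket operation), and $\K x \cap [[{\ga},{\ga}]] = 0$ by the choice of $x$. Proposition \ref{extra} {\it b)} then yields ${\ga} \sim {\ga}/\K x$, producing a member of $\mathcal{C}$ of strictly smaller dimension and contradicting the minimality of $\dim {\ga}$.

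I expect the only delicate step to be the forward direction, where the abstract isomorphism $\xi$ must be translated into precise dimensional information about the centers; this is exactly what Proposition \ref{3.6} {\it b)} was designed to provide, so the translation is almost automatic. The converse reduces to the basic ideal-quotient property of Proposition \ref{extra} {\it b)}, and no further technical obstruction is anticipated.
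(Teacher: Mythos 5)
Your proof is correct and follows essentially the same strategy as the paper's: the forward direction rests on the isoclinism identifying $\ze({\ga})\cap[[{\ga},{\ga}]]$ with $\ze({\he})\cap[[{\he},{\he}]]$ (which you obtain directly from Proposition \ref{3.6}~b), where the paper re-derives the same equality of dimensions via a chain of isomorphisms), and the converse rests on producing a strictly smaller isoclinic quotient. Your contrapositive using the one-dimensional central ideal $\K x$ with $\K x\cap[[{\ga},{\ga}]]=0$ is a slightly more elementary packaging of the paper's appeal to Proposition \ref{stemexistence}, but both ultimately reduce to Proposition \ref{extra}~b).
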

\begin{proof}
Assume that ${\ga}\sim {\he}$. Then
\begin{align*}
[[{\he}, {\he}]]/\left( [[{\he}, {\he}]] \right) \cap \ze({\he})&\cong ([[{\he}, {\he}]]+ \ze({\he}))/\ze({\he})\\
&=[[\he/\ze({\he}),\he/\ze({\he})]]\\
&\cong [[\ga/\ze({\ga}),\ga/\ze({\ga})]]\\
&=([[{\ga}, {\ga}]]+ \ze({\ga}))/\ze({\ga})\\
&\cong [[{\ga}, {\ga}]]/\ze({\ga})
\end{align*}
and $[[{\ga}, {\ga}]]\cong [[{\he}, {\he}]]$. So $\text{dim}~ \ze({\ga})=\text{dim}([[{\he}, {\he}]]\cap \ze({\he}))\leq \text{dim}~\ze({\he})$ which implies that $\text{dim}~ \ga \leq \text{dim}~\he$.

Now suppose that ${\ga}$ is a finite-dimensional \textup{AWB} such that dim  {\ga} = min $\{ \text{dim}~ {\he} \mid {\he} \in {\cal C} \}$. By Proposition \ref{stemexistence} there exists a two-sided ideal $\me$ of $\ga$, $\me\subseteq \ze({\ga})$  such that $\ga/\me$ is a stem \textup{AWB} and $\ga/\me\sim \ga$. It follows that $\me=0$ and $\ga$ is stem \textup{AWB}.
\end{proof}

Now  we prove that the notions of isoclinism and  isomorphism are identical in case of finite-dimensional stem extensions of \textup{AWB}'s.
Following \cite{CP}, given an abelian extension $(G) \colon 0 \to {\ene} \stackrel{\chi}{\to} {\ga} \stackrel{\pi}{\to} {\qu} \to 0$ of  \textup{AWB}'s, a {\it factor set} on $(G)$ is a pair $(f, g)$, where $f \colon {\qu}^{\otimes 2} \to {\ene}$ is a 2-cocycle in the Hochschild complex $C^*({\qu},{\ene}) = {\sf Hom}({\qu}^{\otimes *}, {\ene})$, while $g \colon {\qu} \to {\ene}^e = {\sf Hom}({\qu},{\ene})$ is a linear map, such that
\begin{equation}
[f(a,b),c]-f([a,c],b) - f(a,[b,c]) = (g(a)(c))b + a(g(b)(c)) - (g(ab))(c)
\end{equation}
for all $a, b, c\in {\qu}$.

If $(f, g)$ is a factor set on the extension $(G)$, then the  vector space ${\ene} \oplus {\qu}$ endowed with the operations
\begin{align*}
(n,q)(n',q') & = (qn'+nq'+f(q,q'), qq'),\\
[(n,q),(n',q')] & = ([q,n']+[n,q']+(g(q))(q'), [q,q']),
\end{align*}
has a structure of \textup{AWB}, where $qn', nq', [q,n'], [n,q']$ means the actions provided by the central extension $(G)$,
i.e.  $qn'= \partial(q) \chi(n'), [q,n']= [\partial(q), \chi(n')], nq'= \chi(n) \partial(q'), [n, q']= [\chi(n), \partial(q')]$, for all $q, q' \in {\qu}, n, n' \in {\ene}$, with $\partial$ a linear section of $\pi$,  which is denoted by ${\ene} \oplus_{(f, g)} {\qu}$ and it gives rise to the central extension $(G_{(f,g)} ) \colon 0 \to {\ene}_{(f, g)}  \stackrel{i}{\to} {\ene} \oplus_{(f, g)} {\qu}  \stackrel{p}{\to} {\qu} \to 0$ with $i(n)=(n,0)$ and $p(n,q)=q$. Obviously, ${\ene}_{(f, g)} = {\Ker}(p) \cong {\Ker}({\pi})$. Note that when the extension $(G)$ is central, then the above actions are trivial and the operations reduce to
\begin{align*}
(n,q)(n',q') & = (f(q,q'), qq'),\\
[(n,q),(n',q')] & = ((g(q))(q'), [q,q']).
\end{align*}

\begin{theorem} \label{4.1}
Let $(G_i) \colon 0 \to {\ene}_i \stackrel{\chi_i}\to  {\ga}_i \stackrel{\pi_i} \to {\qu}_i \to 0$, $i=1,2$, be two stem extensions of finite-dimensional \textup{AWB}'s. Then $(G_1)$ and $(G_2)$ are isoclinic if and only if they are \textup{AWB} isomorphic.
\end{theorem}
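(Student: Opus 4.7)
The \emph{if} direction is immediate: an isomorphism of extensions $(\alpha,\beta,\eta)\colon (G_1)\to(G_2)$ yields the isoclinism $(\eta,\beta|_{[[\ga_1,\ga_1]]})$ by Proposition \ref{equivalence}. For the converse, the plan is to start from an isoclinism $(\eta,\xi)\colon (G_1)\sim (G_2)$ and assemble an extension isomorphism $(\alpha,\beta,\eta)$ via two carefully coupled factor sets.

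Since both $(G_1)$ and $(G_2)$ are stem, Proposition \ref{stem} gives $\chi_i(\ene_i)\subseteq [[\ga_i,\ga_i]]$, and Proposition \ref{3.6} b) then shows that $\xi$ restricts to an isomorphism $\chi_1(\ene_1)\to\chi_2(\ene_2)$; this produces an isomorphism $\alpha\colon\ene_1\to\ene_2$ with $\xi\circ\chi_1=\chi_2\circ\alpha$. Next, choose a linear section $s_1\colon\qu_1\to\ga_1$ of $\pi_1$ with $s_1([[\qu_1,\qu_1]])\subseteq[[\ga_1,\ga_1]]$, which exists because $\pi_1$ restricts to a surjection $[[\ga_1,\ga_1]]\twoheadrightarrow[[\qu_1,\qu_1]]$. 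Then pick a linear section $s_2\colon\qu_2\to\ga_2$ of $\pi_2$ satisfying $s_2(\eta(y))=\xi(s_1(y))$ for every $y\in[[\qu_1,\qu_1]]$; this is consistent because Proposition \ref{3.6} a) yields $\pi_2\circ\xi=\eta\circ\pi_1$ on $[[\ga_1,\ga_1]]$.

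Associate to each $(G_i)$ its factor set $(f_i,g_i)$ via $s_i(q)s_i(q')=\chi_i(f_i(q,q'))+s_i(qq')$ and $[s_i(q),s_i(q')]=\chi_i(g_i(q)(q'))+s_i([q,q'])$. Both $qq'$ and $[q,q']$ lie in $[[\qu_1,\qu_1]]$, so the compatibility $s_2\circ\eta=\xi\circ s_1$ there, together with the commutativity of diagram \eqref{square isoclinic} (i.e. $\xi\circ C_1=C_2\circ(\eta\times\eta)$ and $\xi\circ P_1=P_2\circ(\eta\times\eta)$) and injectivity of $\chi_2$, force the pointwise identities
\[
\alpha(f_1(q,q'))=f_2(\eta(q),\eta(q')),\qquad \alpha(g_1(q)(q'))=g_2(\eta(q))(\eta(q')).
\]
Defining $\beta\colon\ga_1\to\ga_2$ by $\beta(\chi_1(n)+s_1(q))=\chi_2(\alpha(n))+s_2(\eta(q))$ produces a linear bijection that makes both squares of the extension diagram commute by construction. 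Combined with the displayed identities and the centrality of the extensions (which kills all cross-terms of $gg'$ and $[g,g']$ coming from $\chi_i(\ene_i)$), $\beta$ is an AWB homomorphism, hence $(\alpha,\beta,\eta)$ is the desired isomorphism of extensions.

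The main obstacle is the preparatory step of locking the sections to the isoclinism. Without the stem hypothesis, the restriction of $\xi$ is only an isomorphism $\chi_1(\ene_1)\cap[[\ga_1,\ga_1]]\to\chi_2(\ene_2)\cap[[\ga_2,\ga_2]]$, which may be strictly smaller than $\chi_i(\ene_i)$ and fails to define $\alpha$ on all of $\ene_1$; and without the coupling $s_2\circ\eta|_{[[\qu_1,\qu_1]]}=\xi\circ s_1|_{[[\qu_1,\qu_1]]}$, the isoclinism identities on $C_i$ and $P_i$ cannot be split into pointwise relations on $f_i,g_i$, since $\xi$ is only defined on the derived subalgebra. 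Finite-dimensionality plays just the background role of guaranteeing $\dim\ga_1=\dim\ga_2$ through the corollary immediately after Proposition \ref{stem}, thus rendering the linear bijectivity of $\beta$ automatic.
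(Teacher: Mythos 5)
Your proof is correct, and it reaches the conclusion by a noticeably more streamlined route than the paper. Both arguments hinge on the same two facts — that stemness upgrades Proposition \ref{3.6} b) to an isomorphism $\alpha\colon\ene_1\to\ene_2$, and that a central extension is determined by a factor set attached to a linear section — but the executions differ. The paper fixes \emph{arbitrary} sections $\partial_1,\partial_2$, realizes $(G_1)\cong (G_1)_{(f,g)}$ and $(G_2)\cong (G_1)_{(F,G)}$ where $(F,G)$ is the pullback of $(G_2)$'s factor set along $(\eta,\xi)$, and then must compare the two factor sets on $\qu_1$; since they agree only up to a coboundary, the paper extracts a correction map $d\colon [[\qu_1,\qu_1]]\to\ene_1$ from the first components of $\xi'(0,q_1q_2)$ and $\xi'(0,[q_1,q_2])$, extends it by zero to $\bar d$ on a complement of $[[\qu_1,\qu_1]]$, and builds the isomorphism $\lambda(n,q)=\xi'(n,0)+(\bar d(q),\eta'(q))$. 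You instead front-load this work into the choice of sections: demanding $s_2\circ\eta=\xi\circ s_1$ on $[[\qu_1,\qu_1]]$ (legitimate, since Proposition \ref{3.6} a) guarantees $\pi_2\circ\xi=\eta\circ\pi_1$ on $[[\ga_1,\ga_1]]$, so the prescription really is a partial section of $\pi_2$) forces the factor sets to match \emph{pointwise} via $\alpha$ and $\eta$, so the coboundary vanishes and $\beta$ can be written down in one stroke. What your version buys is the elimination of the entire third step of the paper's proof and of the auxiliary extensions $\ene_1\oplus_{(f,g)}\qu_1$ and $\ene_1\oplus_{(F,G)}\qu_1$; it also makes transparent that finite-dimensionality is not actually used (linear complements and sections exist in any dimension), whereas your closing remark slightly overstates its role — bijectivity of your $\beta$ already follows from bijectivity of $\alpha$ and $\eta$ together with the decompositions $\ga_i=\chi_i(\ene_i)\oplus s_i(\qu_i)$, with no dimension count needed. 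What the paper's longer route buys is a reusable normal form ($(G)\cong (G)_{(f,g)}$ for any section) and an explicit display of how isoclinism acts on factor sets up to coboundary, which is closer in spirit to the classical cohomological picture.
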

\begin{proof}
The ``if'' part  is a consequence of Proposition \ref{equivalence}. For the converse implication, let $(\eta,\xi) \colon (G_1) \sim (G_2)$ be an isoclinism and consider the factor set $(f, g)$ on $(G_1)$ given by
\begin{align*}
f(a, b) &=  \partial_1(a) \partial_1(b) - \partial_1(ab),\\
g(a)(b) &=  [\partial_1(a), \partial_1(b)] - \partial_1([a,b]),
\end{align*}
where $\partial_1$ is a linear section of $\pi_1$, and  $a, b \in {\qu_1}$.

The \textup{AWB} isomorphism  $\theta \colon {\ene}_1\oplus_{(f, g)} {\qu}_1 \rightarrow
{\ga}_1$ defined by $\theta(n, q) = n +\partial_1(q), n \in {\ene}_1, q \in {\qu}_1$, induces the following commutative diagram:
\[ \xymatrix{
(G_1)_{(f, g)} : 0 \ar[r] & {\ene}_{1_{(f,g)}} \ar[r]^{i_1~~~~~ } \ar@{=}[d] & {\ene}_1 \oplus_{(f, g)} {\qu}_1   \ar@{-->}[d]^{\theta} \ar[r]^{~~~~~p_1}  & {\qu}_1 \ar[r] \ar@{=}[d]& 0\\
(G_1) : 0 \ar[r] & {\ene}_1 \ar[r]^{\chi_1} & {\ga}_1 \ar[r]^{\pi_1} & {\qu}_1 \ar[r] \ar@/_1pc/[l]_{\partial_1} & 0
} \]
Consequently, we have $(G_1) \cong (G_1)_{(f, g)}$.

The isoclinism $(\eta, \xi)$ implies that $\xi({\ene}_1) = {\ene}_2$ by Proposition \ref{3.6} and keeping in mind that $(G_i)$ are stem extensions for $i =1, 2$.  With an argument similar to the previous one we also have a factor set $(h, k)$ provided by the linear section $\partial_2$ corresponding to the extension $(G_2)$ such that $(G_2) \cong (G_2)_{(h, k)}$. Now the maps $F \colon {\qu}_1 \times {\qu}_1 \to {\ene}_1, F(a, b) = \xi^{-1} \left(  h(\eta(a), \eta(b)) \right)$ and $G \colon {\qu}_1 \to {\sf Hom}({\qu}_1, {\ene}_1), G(a)(b) = \xi^{-1} \left( k(\eta(a))(\eta(b)) \right)$ define a factor set which gives rise to the central extension $(G_1)_{(F, G)}$.  Moreover, the following diagram is commutative:
\[ \xymatrix{
(G_1)_{(F, G)} : 0 \ar[r] & {\ene}_{1_{(F,G)}} \ar[r]^{I_1~~~~~ } \ar[d]^{\omega_{\mid}} & {\ene}_1 \oplus_{(F, G)} {\qu}_1   \ar@{-->}[d]^{\omega} \ar[r]^{~~~~~\prod_1}  & {\qu}_1 \ar[r] \ar[d]^{\eta}& 0\\
(G_2)_{(h, k)} : 0 \ar[r] & {\ene}_{2_{(h,k)}} \ar[r]^{i_2 ~~~~~} & {\ene}_2 \oplus_{(h, k)} {\qu}_2 \ar[r]^{~~~~~p_2} & {\qu}_2 \ar[r] \ar@/_1pc/[l]_{\partial_2} & 0
} \]
where $\omega(n_1, q_1) = (\xi(n_1), \eta(q_1)), n_1 \in {\ene}_1, q_1 \in {\qu}_1$. Moreover,  $(\omega_{\mid},\omega, \eta) \colon (G_1)_{(F, G)} \to (G_2)_{(h, k)}$ is an isomorphism, hence $(G_2) \cong (G_1)_{(F, G)}$.

To conclude the proof we will show that $(G_1)_{(f, g)} \cong (G_1)_{(F, G)}$. Indeed, by the previous arguments, we can assume that $(\eta', \xi') \colon (G_1)_{(f, g)} \sim (G_1)_{(F, G)}$ and $\xi'(N_{1_{(f, g)}}) = N_{1_{(F,G)}}$.

On the other hand, for all $q_1, q_2 \in {\qu}_1$ we have:
\[
\begin{array}{rcl}  \xi'(f(q_1, q_2), q_1 q_2) &=&  \xi'(f(q_1, q_2), 0) + \xi'(0, q_1 q_2)\\
 \xi'((0,q_1) (0, q_2)) &=& (0, \eta'(q_1)) (0, \eta'(q_2)) = (F(\eta'(q_1),\eta'(q_2)), \eta'(q_1 q_2))\\
 \\
  \xi'(g(q_1, q_2), [q_1, q_2]) &=&  \xi'(g(q_1, q_2), 0) + \xi'(0,[q_1, q_2])\\
 \xi'([(0,q_1),(0, q_2)]) &=& [(0, \eta'(q_1)), (0, \eta'(q_2))] = (G(\eta'(q_1))(\eta'(q_2)), \eta'([q_1, q_2]))
\end{array}
\]
If we take $d(q_1 q_2)$ as the first component of $\xi'(0, q_1 q_2)$ and  $d([q_1, q_2])$ as the first component of $\xi'(0, [q_1, q_2])$, then we obtain a linear map $d \colon [[{\qu}_1, {\qu}_1]]\to {\ene}_1$ satisfying
\[
\begin{array}{rcl}  \rho \circ \xi'(f(q_1, q_2), 0) + d(q_1 q_2) &=&  F(\eta'(q_1),\eta'(q_2))\\
 \rho \circ \xi'(g(q_1,  q_2), 0) + d([q_1, q_2]) &=&  G(\eta'(q_1),\eta'(q_2))
\end{array}
\]
where $\rho \colon {\ene}_{1_{(F,G)}} \to {\ene}_1$ is the canonical projection. Let {\sf T} be the complementary vector subspace to $[[{\qu}_1, {\qu}_1]]$ in ${\qu}_1$, i.e. ${\qu}_1 = [[{\qu}_1, {\qu}_1]] + {\sf T}$, then we can linearly extend $d$ to $\bar{d} \colon {\qu}_1 \to {\ene}_1$ by defining
\[
\bar{d} (x) = \left \{ \begin{array}{rcl} d(x) & \text{if}  & x \in [[{\qu}_1, {\qu}_1]] \\
0 & \text{if} & x \in {\sf T}
\end{array} \right.
\]
Finally, the linear map $\lambda \colon {\ene}_1 \oplus_{(f, g)} {\qu}_1 \to {\ene}_1 \oplus_{(F, G)} {\qu}_1$ given by $\lambda(n, q) = \xi'(n, 0 ) + (\bar{d}(q), \eta'(q)), n \in {\ene}_1, q \in {\qu}_1$ is an \textup{AWB} isomorphism, which is a consequence of the commutativity of the following diagram:
\[ \xymatrix{
 0 \ar[r] & {\ene}_{1_{(f,g)}} \ar[r]^{} \ar[d]^{\xi'} & {\ene}_1 \oplus_{(f, g)} {\qu}_1   \ar@{-->}[d]^{\lambda} \ar[r]^{}  & {\qu}_1 \ar[r] \ar[d]^{\eta'} & 0\\
 0 \ar[r] & {\ene}_{1_{(F,G)}} \ar[r]^{} & {\ene}_1 \oplus_{(F, G)} {\qu}_1 \ar[r]^{} & {\qu}_1 \ar[r]  & 0
} \]

\noindent Therefore $(\xi', \lambda, \eta') \colon (G_1)_{(f,g)}) \to (G_1)_{(F, G)}$ is an isomorphism, as required.
\end{proof}

Suppose that $(G) \colon 0 \to {\ene} \stackrel{\chi} \to {\ga} \stackrel{\pi} \to {\qu} \to 0$ is a central extension of \textup{AWB}'s and $\as$ is an abelian \textup{AWB}. Then the extension $$(G\oplus A) \colon 0 \to {\ene\oplus \as} \stackrel{(\chi, \id)} \to {\ga\oplus \as} \stackrel{\pi \circ \pi_{\ga}} \to {\qu} \to 0$$
is central, in which $\pi_{\ga} : \ga\oplus \as \rightarrow \ga$ denotes the projective homomorphism. It follows from Proposition \ref{equivalence} {\it a)} that the homomorphism $(\pi_{\ene},\pi_{\ga},{\id}_{\qu}) \colon (G\oplus A)$ $\to (G)$ is an isoclinism.

\begin{theorem} \label{4.3}
Let $\mathcal{C}$ be an isoclinism family of finite-dimensional central extensions of \textup{AWB}'s. Then $\mathcal{C}$ contains at least one stem extension $(H)$, and every central extension lying in $\mathcal{C}$ has the form $(H\oplus A)$, in which $\as$ is a finite-dimensional abelian \textup{AWB}.
\end{theorem}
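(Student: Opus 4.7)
The plan is to combine Proposition \ref{stemexistence} and Theorem \ref{4.1} with a splitting argument made possible by finite-dimensionality. First I would pick any $(G)\colon 0 \to \ene \stackrel{\chi}\to \ga \stackrel{\pi}\to \qu \to 0$ in $\mathcal{C}$ and apply the proof of Proposition \ref{stemexistence}: a Zorn/finite-dimensional maximality argument produces a two-sided ideal $\me$ of $\ga$ with $\me \subseteq \ene$ and $\me \cap [[\ga,\ga]] = 0$ for which $(G/M)$ is a stem extension isoclinic to $(G)$. In particular $(G/M) \in \mathcal{C}$; call it $(H)$. By Theorem \ref{4.1}, any other stem extension in $\mathcal{C}$ is isomorphic to $(H)$ as a central extension, so $(H)$ is well-defined up to isomorphism.

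The substance of the theorem is showing that every $(G)\in \mathcal{C}$ has the form $(H \oplus \as)$. For a given $(G)$, with $\me$ as above I take $\as=\me$: since $\me \subseteq \ze(\ga)$, the product and bracket on $\me$ are both trivial, so $\as$ is a finite-dimensional abelian \textup{AWB}. Using $\me \cap [[\ga,\ga]] = 0$, I would write $\ga = [[\ga,\ga]] \oplus \me \oplus \te$ as $\K$-vector spaces for some complement $\te$, and set $\ele = [[\ga,\ga]] \oplus \te$, so that $\ga = \ele \oplus \me$ as vector spaces with $\ele \supseteq [[\ga,\ga]]$.

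I would next verify that this is in fact an AWB direct sum. For any $a,b \in \ele$ both $ab$ and $[a,b]$ lie in $[[\ga,\ga]] \subseteq \ele$, so $\ele$ is closed under the two operations and is an AWB subalgebra; and for $v \in \ele$, $m \in \me$ the relations $vm = mv = [v,m] = [m,v] = 0$ hold since $\me \subseteq \ze(\ga)$. The restriction $\pi_{\me}|_\ele \colon \ele \to \ga/\me$ of the canonical projection is therefore an AWB isomorphism, and because $\me \subseteq \ene$ we obtain $\ene = (\ene\cap\ele)\oplus\me$ with $\ene\cap\ele$ corresponding to $\ene/\me$. Comparing the two short exact sequences term by term shows that $(G)$ is isomorphic as a central extension to $(G/M)\oplus(\as)$ in the sense defined just before the theorem, and Theorem \ref{4.1} identifies $(G/M)$ with $(H)$, giving $(G)\cong (H\oplus \as)$.

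The main obstacle is producing the AWB splitting $\ga=\ele\oplus\me$. The trick is not that some abstract cocycle vanishes, but simply that the derived ideal $[[\ga,\ga]]$ already contains every product and bracket, so choosing $\ele$ to contain $[[\ga,\ga]]$ makes $\ele$ automatically a subalgebra, while the centrality of $\me$ kills every mixed operation between $\ele$ and $\me$. Without the combination of $\me \cap [[\ga,\ga]] = 0$ and $\me \subseteq \ze(\ga)$ supplied by Proposition \ref{stemexistence}, the decomposition would not simultaneously respect the associative product and the bracket, which is why both conditions are essential.
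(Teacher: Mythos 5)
Your proposal is correct and follows essentially the same route as the paper: the paper also invokes Proposition \ref{stemexistence} to produce the ideal ${\me}$, chooses a complement ${\sf T}$ of ${\me}$ in ${\ga}$ containing $[[{\ga},{\ga}]]$ (your ${\ele}$), observes it is automatically a two-sided ideal because all products and brackets land in $[[{\ga},{\ga}]]$ while centrality of ${\me}$ kills the mixed operations, and then applies Theorem \ref{4.1} to identify the resulting stem extension with $(H)$. The only cosmetic difference is that the paper also records explicitly that $(H\oplus A)\in\mathcal{C}$ for every abelian ${\as}$, a converse remark not strictly required by the statement.
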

\begin{proof}
By Proposition \ref{stemexistence}, $\mathcal{C}$ admits a stem extension $(H)$.
Evidently, for any finite-dimensional abelian \textup{AWB} $\as$, the extensions $(H\oplus A)$ and $(H)$ are isoclinic and consequently $(H\oplus A) \in \mathcal{C}$. Now, suppose $(G) \colon 0 \to {\ene} \stackrel{\chi} \to {\ga} \stackrel{\pi} \to {\qu} \to 0$ is an arbitrary central extension in $\mathcal{C}$. Following the proof of Proposition \ref{stemexistence}, we can find a two-sided ideal $\me$ of {\ga} with ${\me} \subseteq {\ene}$, such that $\ene=\me \oplus ({\ene} \cap [[\ga,\ga]])$ and $(G/M) \colon 0 \to {\ene}/{\me} \overset{\bar{\chi}} \to {\ga}/{\me} \overset{\bar{\pi}} \to {\qu} \to 0$ is a stem extension in ${\cal C}$. Consider the subspace ${\sf T}$ of $\ga$ such that ${\sf T}$ is the complement of $\me$ in $\ga$ and $[[\ga,\ga]]\subseteq {\sf T}$. Then {\sf T} is a two-sided ideal of $\ga$ with $\pi({\sf T}) = \qu$, and the stem extensions $(H)$ and $(G/M)\cong (G_T)$ are isoclinic, where $(G_T) \colon 0 \to {\sf \ene\cap T} \stackrel{\chi} \to {\sf T} \stackrel{\pi_{\mid  \sf T }} \to {\qu} \to 0$. According to Theorem \ref{4.1}, we have $(G_T)\cong (H)$ and hence $(G)\cong (G_T\oplus M)\cong (H\oplus M)$, in which $M$ is a finite-dimensional abelian \textup{AWB} and the result is proved.
\end{proof}

\begin{corollary} \label{4.2}
Let $(G_i) \colon 0 \to {\ene_i} \stackrel{\chi_i} \to {\ga_i} \stackrel{\pi_i} \to {\qu_i} \to 0$, $i=1,2$, be finite-dimensional central extensions with $\textup{dim}~ {\ga}_1=\textup{dim} ~{\ga}_2$. Then $(G_1)$ and $(G_2)$ are isoclinic if and only if they are isomorphic.
\end{corollary}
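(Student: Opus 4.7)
The ``if'' direction is immediate from Proposition \ref{equivalence}, since an isomorphism of central extensions is in particular an isoclinic homomorphism (indeed the $\beta$ in the triple is an isomorphism, so $\Ker(\beta)\cap [[\ga_1,\ga_1]]=0$ trivially). So the content is in the converse, and the plan is to reduce it to Theorem \ref{4.3}.

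Assume $(G_1)\sim (G_2)$ with $\dim \ga_1=\dim \ga_2$. Let $\mathcal{C}$ be the isoclinism family containing both $(G_1)$ and $(G_2)$. By Theorem \ref{4.3}, there exists a stem extension $(H)\in \mathcal{C}$ and finite-dimensional abelian \textup{AWB}'s $\as_1,\as_2$ such that $(G_i)\cong (H\oplus \as_i)$ as central extensions, for $i=1,2$. In particular, writing $(H)\colon 0\to \ene_H\to \ga_H\to \qu_H\to 0$, we obtain $\dim \ga_i=\dim \ga_H+\dim \as_i$. The dimension hypothesis then forces $\dim \as_1=\dim \as_2$.

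The key observation is that a finite-dimensional abelian \textup{AWB} is nothing but a finite-dimensional $\mathbb{K}$-vector space equipped with the zero product and the zero bracket, so any two finite-dimensional abelian \textup{AWB}'s of the same dimension are isomorphic via any $\mathbb{K}$-linear bijection. Hence $\as_1\cong \as_2$, and this isomorphism lifts to an isomorphism $(H\oplus \as_1)\cong (H\oplus \as_2)$ of central extensions (identity on $\ga_H$ and $\qu_H$, together with the chosen linear isomorphism on the $\as$-factor, which automatically respects the direct sum short exact sequence). Composing with the isomorphisms coming from Theorem \ref{4.3}, we conclude $(G_1)\cong (G_2)$ as central extensions of \textup{AWB}'s.

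The only potential subtlety, and the step I would double-check carefully, is that the isomorphisms $(G_i)\cong (H\oplus \as_i)$ produced in the proof of Theorem \ref{4.3} are genuine isomorphisms of \emph{extensions} (commuting triples $(\alpha_i,\beta_i,\gamma_i)$), not merely of the middle terms $\ga_i\cong \ga_H\oplus \as_i$; inspecting that proof, $\gamma_i=\id_{\qu_i}$ and the kernel decomposes as $\ene_i=(\ene_H\cap \sf T)\oplus M$, so the triple structure is preserved. Given this, the argument above is complete and no further calculation is needed.
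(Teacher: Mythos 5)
Your proof is correct and follows exactly the route the paper intends: the corollary is stated without proof immediately after Theorem \ref{4.3}, and your argument --- the ``if'' direction from Proposition \ref{equivalence}, and the converse by writing $(G_i)\cong (H\oplus {\as}_i)$ over a common stem extension $(H)$, using $\dim {\ga}_1=\dim {\ga}_2$ to force $\dim {\as}_1=\dim {\as}_2$ and hence ${\as}_1\cong {\as}_2$ as abelian \textup{AWB}'s --- is precisely the intended derivation. Your closing remark about the isomorphisms in Theorem \ref{4.3} being isomorphisms of extensions (with $\gamma=\id$) is the right point to verify, and it does hold in the paper's proof of that theorem.
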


 At the end of this section, we establish some descriptions of stem covers for $\AWB$'s.

\begin{proposition} \label{stemcover}
For a central extension $(G) \colon 0 \to {\ene} \stackrel{\chi} \to {\ga} \stackrel{\pi} \to {\qu} \to 0$, the following statements are equivalent:
\begin{enumerate}
\item[a)] $(G)$ is a stem cover.
\item[b)] $\theta(G) \colon {\sf H}_1^{\AWB}({\qu}) \rightarrow \ene$ is an isomorphism.
\end{enumerate}
\end{proposition}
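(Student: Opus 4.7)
The plan is to combine Proposition \ref{stem} with exactness in the five-term sequence (\ref{five-term}). A stem cover is by definition a stem extension in which the induced map ${\sf H}_1^{\AWB}({\ga})\to {\sf H}_1^{\AWB}({\qu})$ vanishes, so I will handle the ``stem'' part and the ``covering'' part separately, matching them against surjectivity and injectivity of $\theta(G)$ respectively.

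First, I would note that since $(G)$ is central we have $[[{\ene},{\ga}]]=0$, hence the natural quotient ${\ene}/[[{\ene},{\ga}]]$ appearing in (\ref{five-term}) is just ${\ene}$ and $\theta(G)$ really is a homomorphism ${\sf H}_1^{\AWB}({\qu})\to {\ene}$. Then by Proposition \ref{stem} (equivalence of \emph{a)} and \emph{c)}), $(G)$ is a stem extension if and only if $\theta(G)$ is surjective.

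Next, I would extract injectivity from the exact sequence (\ref{five-term}). Exactness at ${\sf H}_1^{\AWB}({\qu})$ gives
\[
{\Ker}(\theta(G)) \;=\; {\im}\bigl({\sf H}_1^{\AWB}({\ga})\longrightarrow {\sf H}_1^{\AWB}({\qu})\bigr),
\]
so the induced map ${\sf H}_1^{\AWB}({\ga})\to{\sf H}_1^{\AWB}({\qu})$ is zero if and only if $\theta(G)$ is injective.

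Putting the two equivalences together, $(G)$ is a stem cover (stem plus the induced map on ${\sf H}_1^{\AWB}$ vanishes) if and only if $\theta(G)$ is simultaneously surjective and injective, i.e.\ an isomorphism, which proves \emph{a)}~$\Leftrightarrow$~\emph{b)}. There is no real obstacle here: the entire argument rests on identifying the two definitional clauses of ``stem cover'' with surjectivity and injectivity of $\theta(G)$ via the natural five-term exact sequence, and the only subtlety worth mentioning explicitly is the reduction $\ene/[[\ene,\ga]]=\ene$ coming from centrality of $(G)$.
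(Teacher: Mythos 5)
Your argument is correct and is exactly the intended one: the paper's proof simply states that the result is a direct consequence of the five-term exact sequence (\ref{five-term}), and your write-up supplies precisely the details that are implicit there (surjectivity of $\theta(G)$ via Proposition \ref{stem}, injectivity via exactness at ${\sf H}_1^{\AWB}({\qu})$, and the identification ${\ene}/[[{\ene},{\ga}]]={\ene}$ from centrality). No gap and no divergence from the paper's approach.
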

\begin{proof}
This is a direct consequence of the five-term exact sequence $(\ref{five-term})$.
\end{proof}

So an extension $(G) \colon 0 \to {\ene} \stackrel{\chi} \to {\ga} \stackrel{\pi} \to {\qu} \to 0$ of \textup{AWB}'s is  a stem cover of $\qu$ if $\ene\subseteq{\ze}(\ga)\cap [[\ga,\ga]]$ and $\ene\cong {\sf H}_1^{\AWB}({\qu})$. The following result shows the existence of covers for arbitrary finite-dimensional \textup{AWB}'s.

\begin{proposition} \label{coverexistence}
Any finite-dimensional \textup{AWB} ${\qu}$ has at least one cover.
\end{proposition}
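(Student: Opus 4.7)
The plan is to adapt the classical Schur cover construction, which in the Lie and Leibniz settings (see \cite{BC, MSR}) proceeds via a free presentation together with the Hopf-type formula. I start from a free presentation $0 \to {\re} \to {\fe} \to {\qu} \to 0$ of the \textup{AWB} ${\qu}$; formula (\ref{Hopf}) gives
$$
{\sf H}_1^{\AWB}({\qu}) \;\cong\; ({\re} \cap [[{\fe},{\fe}]])/[[{\re},{\fe}]].
$$

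Working at the level of $\K$-vector spaces, I pick a subspace ${\sf S}$ with $[[{\re},{\fe}]] \subseteq {\sf S} \subseteq {\re}$ realising the direct sum decomposition
$$
{\re}/[[{\re},{\fe}]] \;=\; {\sf S}/[[{\re},{\fe}]] \,\oplus\, ({\re} \cap [[{\fe},{\fe}]])/[[{\re},{\fe}]].
$$
A key observation is that any such ${\sf S}$ is automatically a two-sided ideal of ${\fe}$: for $s \in {\sf S} \subseteq {\re}$ and $f \in {\fe}$, the products $sf, fs$ and brackets $[s,f],[f,s]$ all lie in the commutator ideal $[[{\re},{\fe}]] \subseteq {\sf S}$. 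Hence one forms the quotient \textup{AWB}'s ${\ga} := {\fe}/{\sf S}$ and ${\ene} := {\re}/{\sf S}$, producing a short exact sequence $(G') \colon 0 \to {\ene} \to {\ga} \to {\qu} \to 0$.

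The verifications are then routine: $(G')$ is central since $[[{\re},{\fe}]] \subseteq {\sf S}$ forces $[[{\ene},{\ga}]] = 0$; the inclusion ${\ene} \subseteq [[{\ga},{\ga}]]$ follows from ${\re} \subseteq {\sf S} + [[{\fe},{\fe}]]$, which gives ${\ene} \subseteq ([[{\fe},{\fe}]] + {\sf S})/{\sf S} = [[{\ga},{\ga}]]$; and by the complement choice
$$
{\ene} = {\re}/{\sf S} \;\cong\; ({\re}/[[{\re},{\fe}]])\big/({\sf S}/[[{\re},{\fe}]]) \;\cong\; ({\re} \cap [[{\fe},{\fe}]])/[[{\re},{\fe}]] \;\cong\; {\sf H}_1^{\AWB}({\qu}).
$$
Combining these facts with Proposition \ref{stemcover} identifies $(G')$ as a stem cover of ${\qu}$.

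The main point requiring care is the finite-dimensionality of the cover. One should take ${\fe}$ to be the free \textup{AWB} on a finite generating set of ${\qu}$, which is possible since $\textup{dim}\,{\qu} < \infty$. Then $\textup{dim}\,{\ga} = \textup{dim}\,{\qu} + \textup{dim}\,{\sf H}_1^{\AWB}({\qu})$, so the argument reduces to showing that ${\sf H}_1^{\AWB}({\qu})$ is finite-dimensional whenever ${\qu}$ is—a finiteness fact whose proof parallels the corresponding statements for Lie and Leibniz algebras and can be derived by the same techniques.
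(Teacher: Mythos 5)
Your proof is correct and follows essentially the same construction as the paper: take a free presentation, choose a vector-space complement $\se/[[\re,\fe]]$ of the Schur multiplier in $\re/[[\re,\fe]]$ (which, as you rightly observe, is automatically a two-sided ideal of $\fe$ — a point the paper leaves implicit), and pass to the quotients $\ga=\fe/\se$, $\ene=\re/\se$. The only divergence is your closing paragraph: the statement does not assert that the cover is finite-dimensional, so the finiteness of ${\sf H}_1^{\AWB}({\qu})$, which you leave unproved, is not actually needed here, and the paper's proof does not address it either.
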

\begin{proof}
Let $\fe/\re \cong\qu$ be a free presentation of $\qu$ and $\se/[\re, \fe]$ be a complement  of ${\sf H}_1^{\AWB}({\qu})$ in $\re/[\re, \fe]$, for a suitable ideal $\se$ in $\fe$. If we put $\ga = \fe/\se$ and $\ene = \re/\se$, then $\ga/\ene\cong \qu$ and $\ene\cong {\sf H}_1^{\AWB}({\qu})$. From the assumption we have $\re = (\re \cap [[\fe,\fe]]) + \se \subseteq [[\fe,\fe]] + \se$, then $\ene\subseteq{\ze}(\ga)\cap [[\ga,\ga]]$. Hence $\ga$ is a cover of $\qu$.
\end{proof}

\begin{proposition} \label{allstemcovers}
All stem covers of a given \textup{AWB} ${\qu}$ are mutually isoclinic.
\end{proposition}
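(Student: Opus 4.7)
The plan is to deduce this immediately from Theorem \ref{main1}, using the kernel-of-$\theta$ characterization of isoclinism between central extensions, together with the isomorphism characterization of stem covers in Proposition \ref{stemcover}. So the argument should be essentially a one-line verification rather than a genuine construction.

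Let $(G_1) \colon 0 \to \ene_1 \stackrel{\chi_1}\to \ga_1 \stackrel{\pi_1}\to \qu \to 0$ and $(G_2) \colon 0 \to \ene_2 \stackrel{\chi_2}\to \ga_2 \stackrel{\pi_2}\to \qu \to 0$ be two stem covers of $\qu$. The first step is to invoke Proposition \ref{stemcover}, which tells us that the connecting homomorphisms $\theta(G_i) \colon {\sf H}_1^{\AWB}(\qu) \to \ene_i$ are both \emph{isomorphisms} for $i=1,2$. In particular,
\[
\Ker(\theta(G_1)) = 0 = \Ker(\theta(G_2)).
\]

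The second step is to take $\eta = \id_\qu \colon \qu \to \qu$ as the candidate isomorphism between the quotients in Theorem \ref{main1}. Functoriality of the homology gives ${\sf H}_1^{\AWB}(\id_\qu) = \id_{{\sf H}_1^{\AWB}(\qu)}$, and the trivial equality
\[
{\sf H}_1^{\AWB}(\id_\qu)\bigl(\Ker(\theta(G_1))\bigr) = 0 = \Ker(\theta(G_2))
\]
verifies condition (c) of Theorem \ref{main1}. Applying the theorem, $\id_\qu$ induces an isoclinism $(G_1) \sim (G_2)$, which is the desired conclusion.

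There is no genuine obstacle here because the entire framework has been set up in Sections \ref{iso AWB} and \ref{Schur}: the heavy lifting was done in establishing the five-term exact sequence, Proposition \ref{stemcover} and Theorem \ref{main1}. Once those are in hand, the statement that any two stem covers of the same AWB are isoclinic reduces to the tautology that the kernel of an isomorphism is zero. If one prefers a more concrete formulation, one can exhibit the isoclinism pair $(\id_\qu, \xi)$ explicitly, where $\xi \colon [[\ga_1,\ga_1]] \to [[\ga_2,\ga_2]]$ is forced via $\xi = \theta'(G_2) \circ \theta'(G_1)^{-1}$ on the commutators (using the exact sequence (\ref{four-term})), but this is essentially repackaging the same observation.
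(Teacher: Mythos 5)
Your proof is correct and follows exactly the paper's own argument: both invoke Proposition \ref{stemcover} to get $\Ker(\theta(G_i))=0$ for $i=1,2$, and then apply condition (c) of Theorem \ref{main1} with $\eta=\id_{\qu}$ to conclude the isoclinism. No discrepancies to report.
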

\begin{proof}
Assume $(G_1)$ and $(G_2)$ are two stem covers of ${\qu}$. It follows from Proposition \ref{stemcover} that ${\Ker}(\theta(G_i))=0$, $i=1,2$. Hence Theorem \ref{main1} implies that $\id_{\qu} : \qu \to \qu$ induces an isoclinism from $(G_1)$ to $(G_2)$, as desired.
\end{proof}

}

\section*{\bf Acknowledgments} First author was supported by Agencia Estatal de Investigaci\'on (Spain), grant PID2020-115155GB-I00 (European FEDER
	support included, UE).

\begin{center}

\end{center}

\end{document}